\theoremstyle{plain}
\newtheorem{mainthm}{Theorem}
\newtheorem{thm}{Theorem}[section]
\newtheorem{lem}[thm]{Lemma}
\newtheorem{prop}[thm]{Proposition}
\theoremstyle{definition}
\newtheorem{defn}[thm]{Definition}
\newtheorem{example}[thm]{Example}
\theoremstyle{remark}
\newtheorem{rem}[thm]{Remark}
\newcommand{\ncp}{\textsc{NC}}
\newcommand{\pkfn}{\textsc{PF}}
\newcommand{\pos}{\textsc{Poset}}
\newcommand{\st}{\textsc{Star}}
\newcommand{\link}{\textsc{Link}}
\newcommand{\bool}{\textsc{Bool}}
\newcommand{\bv}{\textbf{v}}
\newcommand{\bu}{\textbf{u}}
\tikzstyle{BlueLine}=[line width=0.3mm,color=blue,text=black]
\tikzstyle{BluePoly}=[BlueLine,fill=blue!20]
\tikzstyle{RedLine}=[line width=0.3mm,color=red,text=black]
\tikzstyle{RedPoly}=[RedLine,fill=red!20]
\tikzstyle{GreenLine}=[thick,color=black!30!green,text=black]
\tikzstyle{OrangeLine}=[thick,color=orange]
\tikzstyle{GrayLine}=[thick,color=black!50!gray]
\tikzstyle{GrayPoly}=[GrayLine,fill=gray!20]
\tikzstyle{dot}=[shape=circle,draw,color=black,fill=black,inner sep=1.5pt]
\tikzstyle{littledot}=[dot,inner sep=0.75pt]
\tikzstyle{disk}=[thick,shape=circle,draw,color=black,fill=yellow!10]
\tikzstyle{plate}=[thick,shape=rectangle,draw,color=black,fill=yellow!10,rounded corners,minimum size=1.1cm]
\newcommand{\makepoints}{\foreach \n in {1,...,4} {\coordinate (\n) at (\n*-90-135:0.5cm);};}
\newcommand{\drawpoints}{\foreach \n in {1,...,4} {\draw (\n) node [littledot] {};};}
\begin{document}

\title[Undesired parking spaces]{Undesired parking spaces and contractible pieces of the
  noncrossing partition link} 

\author{Michael Dougherty} \email{dougherty@math.ucsb.edu}
\address{Department of Mathematics, UC Santa Barbara, Santa Barbara,
  CA 93106} 

\author{Jon McCammond} \email{jon.mccammond@math.ucsb.edu}
\address{Department of Mathematics, UC Santa Barbara, Santa Barbara,
  CA 93106} 

\date{\today}

\begin{abstract}
  There are two natural simplicial complexes associated to the
  noncrossing partition lattice: the order complex of the full lattice
  and the order complex of the lattice with its bounding elements
  removed.  The latter is a complex that we call the \emph{noncrossing
    partition link} because it is the link of an edge in the former.
  The first author and his coauthors conjectured that various
  collections of simplices of the noncrossing partition link
  (determined by the undesired parking spaces in the corresponding
  parking functions) form contractible subcomplexes.  In this article
  we prove their conjecture by combining the fact that the star of a
  simplex in a flag complex is contractible with the second author's
  theory of noncrossing hypertrees. 
\end{abstract}

\maketitle

The \emph{noncrossing partition lattice} $\ncp_n$ is a fundamental
object in modern combinatorics and from it one can construct two
natural simplicial complexes: the order complex of the full lattice
and the order complex of the lattice with its bounding elements
removed.  The former has a natural piecewise euclidean metric that has
been the subject of some study, particularly for its connections to
curvature of the $n$-strand braid group \cite{bradymccammond10, hks16,
  dmw-cat0}.  The latter has a natural piecewise spherical metric and
we call it the \emph{noncrossing partition link} $\link(\ncp_n)$
because it is the link of an edge in the former
\cite{bradymccammond10}.  Since the top-dimensional simplices in the
noncrossing partition link are in natural bijection with the maximal
chains in $\ncp_n$, the factorizations of the $n$-cycle
$(1,2,\ldots,n)$ into $n-1$ transpositions, the parking functions of
length $n-1$ and the properly ordered noncrossing trees on $n$ labeled
vertices, any of these sets can be used as labels on these simplices
\cite{stanley97, mccammond-ncht}. The first result we prove is the
following.

\begin{mainthm}[Unused boundary edge]\label{main:edge}
  The simplices of the noncrossing partition link labeled by properly
  ordered noncrossing trees that omit a fixed boundary edge $e$ form a
  contractible subcomplex.
\end{mainthm}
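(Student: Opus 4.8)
The plan is to identify the subcomplex in question with the closed star of a single simplex in $\link(\ncp_n)$ and then to invoke two facts: that $\link(\ncp_n)$ is a flag complex (being an order complex, it is the clique complex of its comparability graph), and that the closed star of any simplex in a flag complex is contractible. Write $\Delta_e$ for the subcomplex of $\link(\ncp_n)$ generated by those top-dimensional simplices whose noncrossing tree omits the boundary edge $e$; this is a genuine subcomplex, being closed under passage to faces by definition.

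The first step is to set up the dictionary coming from the second author's theory of noncrossing hypertrees: the simplices of $\link(\ncp_n)$ are matched with noncrossing hypertrees on $n$ vertices, the top-dimensional simplices corresponding to the (properly ordered) noncrossing trees, and the face order corresponding to the operation of merging hyperedges. Under this dictionary, the condition that the tree of a maximal chain omits $e$ becomes an intrinsic statement about the associated hypertree --- no hyperedge contains both endpoints of $e$ --- and one is led to ask exactly when a face of $\link(\ncp_n)$ lies in a maximal simplex of this type.

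The heart of the argument is to pin down the right simplex $\sigma_e$: the canonical noncrossing hypertree configuration that any $e$-avoiding tree is forced to refine, read off (for instance) from the $(n-1)$-gon obtained by collapsing the edge $e$. One then proves that $\Delta_e$ is precisely the closed star $\st(\sigma_e)$, by showing that a maximal chain in $\ncp_n$ has $e$-avoiding tree if and only if every noncrossing partition occurring in that chain is comparable to every vertex of $\sigma_e$. The forward implication is the explicit hypertree construction; the reverse uses the structure theory of noncrossing hypertrees to extend any chain compatible with $\sigma_e$ to an $e$-avoiding maximal chain. Granting this identification, $\Delta_e = \st(\sigma_e)$ is contractible because $\link(\ncp_n)$ is flag.

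The main obstacle I anticipate is exactly this identification of $\sigma_e$ together with the verification of the ``if and only if'': it rests on understanding what the link of a simplex looks like in hypertree terms, on classifying which hypertrees avoid $e$, and on seeing how the family of $e$-avoiding hypertrees sits inside the noncrossing partition lattice. A secondary point requiring care is that $\Delta_e$ may not be literally a star but only collapse onto one; in that event the argument is recast as a sequence of elementary folds --- repeatedly deleting the open star of a dominated vertex, with flagness used to detect domination at the level of the comparability relation --- terminating at $\st(\sigma_e)$.
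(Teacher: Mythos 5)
Your two main ingredients---that the closed star of a simplex in a flag complex is contractible, and that order complexes of posets are flag---are both correct, and the first is exactly what the paper establishes (Proposition~\ref{prop:stars-contract}). But your plan to locate the subcomplex $\Delta_e$ as the closed star of a simplex \emph{inside $\link(\ncp_n)$ itself} breaks down: the paper's Example~\ref{ex:ncpl4} shows explicitly for $n=4$ that the subcomplex of $e$-avoiding simplices is \emph{not} the star of the edge labeled $T_e$ in $\link(\ncp_4)$. The difficulty is that a single noncrossing tree $T$ can have several proper orderings, each giving a different maximal simplex of $\link(\ncp_n)$, and these maximal simplices are not all incident to any single simplex $\sigma_e$ of $\link(\ncp_n)$. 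You flag this possibility as ``a secondary point requiring care'' and propose to repair it by a sequence of folds, but that is not fleshed out, and it is in fact the central obstruction rather than a side issue.

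What the paper does instead is change the simplicial structure rather than the argument: it passes to the \emph{noncrossing hypertree complex} $X$, a coarser triangulation of the same underlying space (Theorem~\ref{thm:ncht-ncp}). In $X$ the maximal simplices are labeled by unordered noncrossing trees, and the several proper orderings of a fixed tree $T$ all coalesce into a single maximal simplex. With this coarser structure, $X$ is visibly flag (Lemma~\ref{lem:flag}, via the polygon-dissection model), and the $e$-avoiding subcomplex really is the star of the simplex labeled $T_e$, the tree consisting of all boundary edges except $e$ (Lemma~\ref{lem:star}). Contractibility in $X$ then transfers to $\link(\ncp_n)$ via the homeomorphism. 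So the missing idea is precisely to trade the order-complex triangulation for the hypertree triangulation before looking for a star; without that step, the ``star in a flag complex'' strategy does not apply.
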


Our proof combines an easy result about flag complexes with the second
author's theory of noncrossing hypertrees \cite{mccammond-ncht}.
Theorem~\ref{main:edge} can also be stated in terms of parking
functions.

\begin{mainthm}[Undesired last parking space]\label{main:last}
  The simplices of the noncrossing partition link labeled by parking
  functions where no car wants to park in the last parking space form
  a contractible subcomplex.
\end{mainthm}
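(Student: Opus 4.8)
The plan is to derive Theorem~\ref{main:last} from Theorem~\ref{main:edge} by converting the parking-function description of the relevant subcomplex into the noncrossing-tree description. Recall that a top-dimensional simplex of $\link(\ncp_n)$ corresponds to a maximal chain $\hat{0}\lessdot\pi_1\lessdot\cdots\lessdot\pi_{n-1}\lessdot\hat{1}$ in $\ncp_n$, hence to a reduced reflection factorization $(1,2,\ldots,n)=t_{n-1}\cdots t_1$ in which the step $\pi_{i-1}\lessdot\pi_i$ multiplies by a transposition $t_i=(a_i\ b_i)$ with $a_i<b_i$; reading the arcs $\{a_i,b_i\}$ in order recovers the properly ordered noncrossing tree on $\{1,\ldots,n\}$, while the associated parking function of length $n-1$ records, at position $i$, the parking preference of car $i$. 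My first step is to spell this bijection out on the nose, following Stanley and the second author's noncrossing-hypertree framework: in the normalization in force here the preference of car $i$ equals $a_i$, the smaller endpoint of $t_i$.

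The crucial observation is that $n-1$ is the largest preference available, and it can occur only when $t_i=(n-1\ n)$, that is, only when the step $\pi_{i-1}\lessdot\pi_i$ adjoins the boundary arc $e=\{n-1,n\}$ to the tree; conversely, whenever the noncrossing tree contains $e$, the step realizing that arc is multiplication by $(n-1\ n)$ and so produces a car preferring space $n-1$. Hence a parking function of length $n-1$ has the property that no car wants the last parking space precisely when the corresponding properly ordered noncrossing tree omits the boundary edge $e$. (Which boundary edge appears is an artifact of the normalization chosen for the bijection; since Theorem~\ref{main:edge} applies to an arbitrary fixed boundary edge, this is harmless.) Consequently the two descriptions single out the same collection of top-dimensional simplices of $\link(\ncp_n)$, hence the same subcomplex, and Theorem~\ref{main:edge} immediately yields the contractibility asserted in Theorem~\ref{main:last}.

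I expect the only genuine work to lie in the middle step: pinning down the parking-function/noncrossing-tree bijection precisely, checking that a single top-dimensional simplex really does carry matching labels on the two sides, and verifying the elementary but convention-sensitive equivalence ``the value $n-1$ never occurs as a preference'' $\iff$ ``the arc $\{n-1,n\}$ is never an edge of the tree.'' It is also worth recording explicitly that in both Theorem~\ref{main:edge} and Theorem~\ref{main:last} the phrase ``the simplices labeled by $\ldots$'' denotes the subcomplex generated by the corresponding top-dimensional simplices (equivalently, the full subcomplex whose maximal simplices are exactly those), so that translating the two index sets genuinely translates the two subcomplexes. With that bookkeeping in place, Theorem~\ref{main:last} is a formal consequence of Theorem~\ref{main:edge} and nothing further is needed.
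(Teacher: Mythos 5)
Your argument is correct and is, up to a shift of index ($n$ versus $n{+}1$) and some explicit unpacking of Stanley's bijection, exactly the route the paper takes: it derives Theorem~\ref{main:last} from Theorem~\ref{main:edge} via the observation (the paper's Lemma~\ref{lem:tree-park}) that under Stanley's bijection the largest possible parking preference occurs in a factorization if and only if the boundary transposition joining the last two vertices appears, i.e.\ if and only if the corresponding noncrossing tree contains that boundary edge. The only nitpick is a small notational slip in your displayed chain $\hat{0}\lessdot\pi_1\lessdot\cdots\lessdot\pi_{n-1}\lessdot\hat{1}$, which as written has one too many covers for $\ncp_n$; the substance of the argument is unaffected.
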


In this form, Theorem~\ref{main:last} is one part of the conjecture
that the first author and his coauthors made in \cite{reu16}.  The
more general conjecture involves filtering the collection of parking
functions by the last parking space that is undesired.  We also prove
the general version.

\begin{mainthm}[Undesired parking space]\label{main:space}
  The simplices of the noncrossing partition link labeled by the
  collection of parking functions for which $k$ is the number of the
  largest undesired parking space form a contractible subcomplex.
\end{mainthm}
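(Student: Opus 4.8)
The plan is to deduce Theorem~\ref{main:space} from Theorem~\ref{main:edge} by collapsing a forced piece of the labelling tree, controlling the collapse with the theory of noncrossing hypertrees exactly as in the proof of Theorem~\ref{main:edge}. Throughout, fix $k$; the collection of parking functions with largest undesired space $k$ is nonempty precisely when $2\le k\le n-1$, and the case $k=n-1$ is already Theorem~\ref{main:last}, so we assume $2\le k\le n-2$.

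\emph{Step 1: translate to trees.} Using the bijection recalled above between parking functions of length $n-1$ and properly ordered noncrossing trees on the $n$-gon, the condition ``$k$ is the largest undesired parking space'' becomes the condition that the labelling tree contains no edge from vertex $k$ to a larger-numbered vertex while every vertex in $\{k+1,\ldots,n-1\}$ is joined to a larger-numbered vertex. The latter clause forces the edges of the tree lying inside the arc $\{k+1,\ldots,n\}$ to span a noncrossing subtree on those vertices, and the former clause says that $k$ is not joined to that subtree. Write $Y_k$ for the subcomplex of $\link(\ncp_n)$ spanned by the top simplices satisfying this condition; the goal is to show $Y_k$ is contractible.

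\emph{Step 2: contract the arc.} Collapse the arc $k+1,\ldots,n$ to a single vertex $\ast$, turning the $n$-gon into a $(k{+}1)$-gon with cyclic vertex order $1,2,\ldots,k,\ast$. Each tree labelling a top simplex of $Y_k$ pushes forward to a noncrossing tree on this smaller polygon, the forced subtree is collapsed away, and ``no edge from $k$ to a larger vertex'' becomes ``the boundary edge $\{k,\ast\}$ is absent.'' The claim is that this collapse realizes $Y_k$ as homotopy equivalent to --- and, I expect, as deformation retracting onto a subcomplex isomorphic to --- the subcomplex $Z\subseteq\link(\ncp_{k+1})$ of properly ordered noncrossing trees omitting the boundary edge $\{k,\ast\}$. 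Granting the claim, Theorem~\ref{main:edge} applied to $\link(\ncp_{k+1})$ and the edge $\{k,\ast\}$ shows that $Z$ is contractible, and hence $Y_k$ is contractible.

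\emph{Step 3: the hypertree bookkeeping, and the main obstacle.} To prove the claim I would run the machine from the proof of Theorem~\ref{main:edge}: the simplices of $\link(\ncp_n)$ are indexed by (properly ordered) noncrossing hypertrees, with the face order given by refinement, and membership in $Y_k$ is visible on the hypertree. Arc-collapse is then a map from the hypertrees indexing $Y_k$ to those indexing $Z$, and the content of the claim is that its point inverses --- taken over simplices, not merely over vertices --- record only the inessential data of which noncrossing subtree on $\{k+1,\ldots,n\}$ was collapsed and how its edges interleave, in the proper order, with the remaining edges, and that this data fills out the closed star of an explicit simplex of the flag complex $\link(\ncp_n)$, which is contractible. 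Verifying this is the main obstacle: the forced subtree is not a single edge and is not carved out by vertices of the link, so a single tree on the $(k{+}1)$-gon lifts to many trees of $Y_k$, and the real work is to check --- through the hypertree dictionary --- that, despite being many-to-one, the collapse is a homotopy equivalence. The remaining ingredients are routine: the explicit form of the bijection in Step 1, the identification of the quotient complex with the Theorem~\ref{main:edge} complex for the edge $\{k,\ast\}$, and the degenerate values of $k$.
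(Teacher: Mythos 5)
Your Step~3 names the obstacle but does not clear it, and that obstacle is precisely the content of the theorem. The paper's actual proof is shorter and routes around the difficulty you describe: it quotes the decomposition result from \cite{reu16} (stated as Lemma~\ref{lem:decomp}), which says that for $k+\ell = n$ with $k > 1$ the poset $\pos(\pkfn_{n,k})$ is isomorphic to the product $\pos(\pkfn_{k,k}) \times \bool_\ell$. Passing to links, Lemma~\ref{lem:link-prod} gives $\link(\pos(\pkfn_{n,k})) \cong \link(\pos(\pkfn_{k,k})) \ast \link(\bool_\ell)$, a spherical join, and since $\link(\pos(\pkfn_{k,k}))$ is contractible by Theorem~\ref{main:last}, the join --- and hence the subcomplex in question --- is contractible. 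This is a one-paragraph derivation once the decomposition is in hand.

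Two concrete issues with your version. First, your claim that the arc-collapse realizes $Y_k$ as deformation retracting onto a subcomplex isomorphic to $Z$ is almost certainly not literally correct: the structure the decomposition reveals is that $Y_k$ is a \emph{join} $Z \ast S^{\ell-2}$ (the link of $\bool_\ell$ is a sphere), and a join $Z \ast S^{\ell-2}$ does not in general deformation retract onto the factor $Z$ even when $Z$ is contractible. What saves the argument is the weaker topological fact that a join with a contractible factor is contractible (cone on the other factor up to homotopy). So the right reformulation of your Step~2 is as a join decomposition, not a retraction, and proving it amounts to re-deriving Lemma~\ref{lem:decomp}. Second, you write that verifying the homotopy equivalence of the many-to-one collapse map is ``the main obstacle'' and then leave it unverified; even if you repaired the retraction claim, any direct argument that the collapse is a homotopy equivalence without already knowing both sides are contractible would need real work (a fibre-by-fibre contractibility argument, or a nerve-type lemma), and none is supplied. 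The net effect is that your proposal contains the correct intuition --- the ``extra data'' carried by the collapsed arc is homotopically inert --- but it is exactly the decomposition lemma and the link-of-a-product lemma that make that intuition into a proof, and both are missing here.
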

 
Theorem~\ref{main:space} quickly follows from Theorem~\ref{main:last}
and a few elementary lemmas.  These results have also been proved by
Henri M\"uhle using a completely different approach, namely, by
embedding the relevant poset into a supersolvable lattice.  See
\cite{muehle} for details.

The structure of the article is as follows.  After proving that the
star of a simplex in a flag complex is contractible, we review the
theory of noncrossing hypertrees and prove Theorem~\ref{main:edge}.
When re-interpreted in terms of parking functions
Theorem~\ref{main:edge} becomes Theorem~\ref{main:last} and then
Theorem~\ref{main:space} is derived as an easy corollary.

\section{Simplices}\label{sec:simplices}

This section recalls an easy way to deform a simplex onto a subspace.

\begin{defn}[Simplices]\label{def:simplex}
  A \emph{simplex} $\sigma$ is the convex hull of a finite set $S$ of
  points in general position in some euclidean space and the convex
  hull of any proper subset of $S$ is a \emph{proper subsimplex of
    $\sigma$}.  For any simplex $\sigma$ and proper subsimplex $\tau$
  let $\sigma \setminus \tau$ denote the subspace of $\sigma$ that is
  the union of all the subsimplices of $\sigma$ that do not contain
  all of the vertices of $\tau$.  If $\bv_1,\ldots,\bv_n$ are the
  vertices of $\sigma$,  then each point $\mathbf{p}$ in
  $\sigma$ has a unique description $\mathbf{p} = a_1\bv_1 + \cdots
  +a_n\bv_n$ with each $a_i \geq 0$ and $\sum_i a_i = 1$.  These
  unique scalars $a_i$ are the \emph{barycentric coordinates} of
  $\mathbf{p}$.  For each point $\mathbf{p}$, the vertices $\bv_i$
  corresponding to the non-zero $a_i$ coordinates determine the unique
  smallest subsimplex of $\sigma$ that contains $\mathbf{p}$.  The
  points in $\sigma \setminus \tau$ are precisely those where at least
  one of the vertices in $\tau$ has $0$ as its barycentric coordinate.
  The \emph{barycenter} of a subsimplex $\tau$ is the average of its
  vertices.  Concretely, if $\tau$ has vertices $\bv_1, \ldots,
  \bv_k$, then $\mathbf{p} = \frac{1}{k}(\bv_1 + \cdots + \bv_k)$ is
  its barycenter.
 \end{defn}

\begin{prop}[Removing a proper face]\label{prop:deform}
  Let $\sigma$ be a simplex.  For each proper subsimplex $\tau \subset
  \sigma$ there is a deformation retraction from $\sigma$ to
  $\sigma\setminus\tau$.
\end{prop}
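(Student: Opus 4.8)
The plan is to avoid any clever geometry and instead write down an explicit deformation retraction in barycentric coordinates. First I would relabel the vertices of $\sigma$ so that $\tau$ has vertices $\bv_1,\ldots,\bv_k$; because $\tau$ is a \emph{proper} subsimplex we have $k<n$, so at least one vertex of $\sigma$ lies outside $\tau$, and this is exactly the hypothesis the argument will need. By Definition~\ref{def:simplex}, a point $\mathbf{p}=a_1\bv_1+\cdots+a_n\bv_n$ of $\sigma$ lies in $\sigma\setminus\tau$ precisely when $a_i=0$ for some $i\le k$, i.e.\ precisely when the continuous function $m(\mathbf{p}):=\min\{a_1,\ldots,a_k\}$ vanishes. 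So it suffices to flow $\sigma$ onto the zero set of $m$, and I would do this by uniformly draining the smallest of the barycentric coordinates attached to the vertices of $\tau$ and dumping the drained mass onto the vertices outside $\tau$.

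Concretely, define $r\colon\sigma\to\sigma$ by letting the $i$-th barycentric coordinate of $r(\mathbf{p})$ be $a_i-m(\mathbf{p})$ for $i\le k$ and $a_i+\tfrac{k\,m(\mathbf{p})}{n-k}$ for $i>k$. The routine checks are: each new coordinate is nonnegative (for $i\le k$ because $m(\mathbf{p})\le a_i$, and for $i>k$ because $m(\mathbf{p})\ge 0$), and the new coordinates still sum to $1$ since the total $k\,m(\mathbf{p})$ removed from the first $k$ coordinates equals the total added to the remaining $n-k$; hence $r(\mathbf{p})\in\sigma$. Moreover the smallest of the first $k$ coordinates of $r(\mathbf{p})$ is $m(\mathbf{p})-m(\mathbf{p})=0$, so $r(\mathbf{p})\in\sigma\setminus\tau$, and if $\mathbf{p}\in\sigma\setminus\tau$ already then $m(\mathbf{p})=0$ and $r(\mathbf{p})=\mathbf{p}$. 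Setting $H(\mathbf{p},t)=(1-t)\,\mathbf{p}+t\,r(\mathbf{p})$, I would then observe that $H$ is continuous (because $m$ is), that its image lies in $\sigma$ (because $\sigma$ is convex and both $\mathbf{p}$ and $r(\mathbf{p})$ lie in $\sigma$), that $H(\cdot,0)=\mathrm{id}_\sigma$ and $H(\cdot,1)=r$ has image in $\sigma\setminus\tau$, and that $H(\mathbf{p},t)=\mathbf{p}$ for all $t$ when $\mathbf{p}\in\sigma\setminus\tau$. This exhibits $H$ as a (strong) deformation retraction of $\sigma$ onto $\sigma\setminus\tau$.

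The step that needs genuine care—and the reason for the slightly indirect construction—is continuity along the relative interior of $\tau$. The geometrically obvious candidate, radial projection away from the barycenter $b$ of $\tau$ onto the union of facets $\{a_i=0\}$ with $i\le k$, is a perfectly good retraction of $\sigma\setminus\{b\}$ onto $\sigma\setminus\tau$, but it admits no continuous extension across $b$: the image of a point near $b$ depends on the direction of approach, which is just the familiar obstruction that a cone does not deformation retract onto its base. Draining the minimum $\tau$-coordinate instead moves $b$, and every point near it, in the single definite direction determined by the vertices outside $\tau$, which is exactly what makes $H$ continuous everywhere. Past that observation, the proof is only the bookkeeping recorded above.
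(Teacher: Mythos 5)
Your proof is correct and is essentially the same as the paper's: your map $r$, written in barycentric coordinates, is exactly the retraction the paper constructs geometrically (flowing in the direction from the barycenter of $\tau$ to the barycenter of the complementary face $\tau'$, with the coordinate formula appearing verbatim in the paper's Remark~\ref{rem:deform}), and your $H$ is the paper's straight-line homotopy. The only difference is presentational — you lead with the explicit coordinates while the paper leads with the geometric picture — together with your instructive side observation about why radial projection from the barycenter of $\tau$ fails.
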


\begin{proof}
  Since $\tau$ is a proper subsimplex, there are vertices of $\sigma$ not in
  $\tau$ and the collection of all such vertices spans a disjoint
  subsimplex that we call $\tau'$.  Let $\bv_{\tau,\tau'}$ be the
  vector from the barycenter of $\tau$ to the barycenter of $\tau'$.
  For each point $\mathbf{p}$ in $\sigma$ the line $\mathbf{p} +
  t\cdot \bv_{\tau,\tau'}$ through $\mathbf{p}$ in the
  $\bv_{\tau,\tau'}$ direction intersects the subcomplex $\sigma
  \setminus \tau$ in a single point.  The function $f:\sigma \to
  \sigma$ that sends each point in $\sigma$ to this well-defined point
  in $\sigma \setminus \tau$ is a continuous retraction and, since
  $\sigma$ is convex, there is a straight-line homotopy between the
  identity map and $f$.  This is a deformation retraction from
  $\sigma$ to $\sigma \setminus \tau$ which leaves the points in the
  subcomplex fixed throughout.
\end{proof}

\begin{rem}[Removing a proper face]\label{rem:deform}
  The retraction $f$ is easy to describe in barycentric coordinates.
  Let $\bv_1, \bv_2, \ldots, \bv_k$ be the vertices of $\tau$, let
  $\bu_1, \bu_2, \ldots, \bu_\ell$ be the vertices of $\tau'$ and let
  \[
  \mathbf{p} = a_1 \bv_1 + \cdots + a_k \bv_k + b_1 \bu_1 + \cdots +
  b_\ell \bu_\ell
  \] 
  (with all $a_i, b_j \geq 0$ and $\sum_i a_i +
  \sum_j b_j = 1$) be an arbitrary point in $\sigma$.  Then
  $\bv_{\tau,\tau'} = -\frac{1}{k}(\bv_1 + \bv_2 + \ldots + \bv_k) +
  \frac{1}{\ell}(\bu_1 + \bu_2 + \ldots + \bu_\ell)$ and the unique
  point on the line $\mathbf{p} + t \cdot \bv_{\tau,\tau'}$ that lies
  in $\sigma \setminus \tau$ occurs when $t = m \cdot k$, where $m$ is
  the minimum value of the set $\{a_1,a_2,\ldots,a_k\}$ and $k$ is the
  number of vertices in $\tau$.  For smaller values of $t$, all of the
  $a_i$ coordinates remain positive and the point is not yet in
  $\sigma \setminus \tau$.  For larger values of $t$, at least one of
  the $a_i$ coordinates is negative and the point lies outside the
  simplex $\sigma$.
\end{rem}

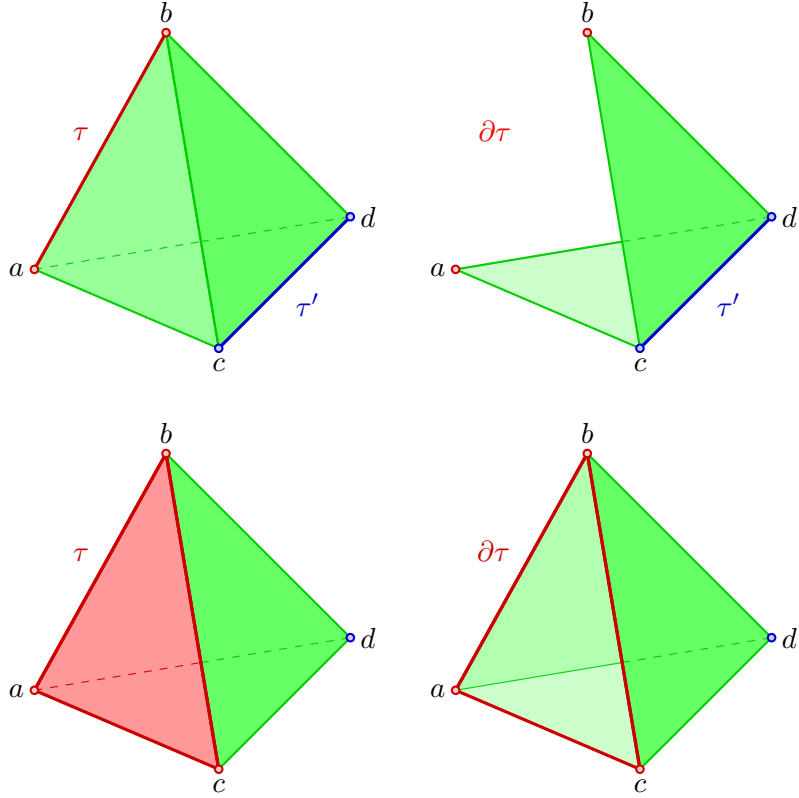
\begin{figure}
  \begin{tikzpicture}[scale=.7]
    \tikzstyle{redline}=[very thick,color=red!80!black]
    \tikzstyle{greenline}=[thick,color=green!80!black]
    \tikzstyle{blueline}=[very thick,color=blue!80!black]
    \tikzstyle{bluedot}=[shape=circle,draw,thick,color=blue!80!black,fill=blue!20,inner sep=1pt]
    \tikzstyle{reddot}=[shape=circle,draw,thick,color=red!80!black,fill=red!20,inner sep=1pt]
    \tikzstyle{redpoly}=[redline,fill=red!40,line join=round]
    \tikzstyle{greenpoly}=[thick,color=green!80!black,fill=green!60]
    \begin{scope}[shift={(-4,4)}]
      \coordinate (p1) at (-3,-1.5) {};
      \coordinate (p2) at (-.5,3) {};
      \coordinate (p3) at (.5,-3) {};
      \coordinate (p4) at (3,-.5) {};
      \coordinate (p5) at (intersection of p1--p4 and p2--p3);
      \node[left] at (p1) {$a$};
      \node[above] at (p2) {$b$};
      \node[below] at (p3) {$c$};
      \node[right] at (p4) {$d$};
      \filldraw[greenpoly] (p2)--(p4)--(p3);
      \filldraw[greenpoly,fill=green!40] (p1)--(p2)--(p3)--cycle;
      \draw[dashed,green!80!black] (p1)--(p5);
      \draw[dashed,green!80!black] (p5)--(p4);
      \draw[greenline] (p2)--(p3);
      \draw[redline] (p1) -- node[above left] {$\tau$} (p2);
      \draw[blueline] (p3) -- node[below right] {$\tau'$} (p4);
      \node[reddot] at (p1) {};
      \node[reddot] at (p2) {};
      \node[bluedot] at (p3) {};
      \node[bluedot] at (p4) {};
    \end{scope}
        
    \begin{scope}[shift={(4,4)}]
      \coordinate (p1) at (-3,-1.5) {};
      \coordinate (p2) at (-.5,3) {};
      \coordinate (p3) at (.5,-3) {};
      \coordinate (p4) at (3,-.5) {};
      \coordinate (p5) at (intersection of p1--p4 and p2--p3);
      \node[left] at (p1) {$a$};
      \node[above] at (p2) {$b$};
      \node[below] at (p3) {$c$};
      \node[right] at (p4) {$d$};
      \filldraw[color=green!20] (p1)--(p5)--(p3)--cycle;
      \filldraw[greenpoly] (p2)--(p4)--(p3)--cycle;
      \draw[color=white,text=red] (p1) -- node[above left] {$\partial\tau$} (p2);
      \draw[blueline] (p3) -- node[below right] {$\tau'$} (p4);
      \draw[greenline] (p3)--(p1)--(p5);
      \draw[dashed,green!80!black] (p5)--(p4);
      \node[reddot] at (p1) {};
      \node[reddot] at (p2) {};
      \node[bluedot] at (p3) {};
      \node[bluedot] at (p4) {};
    \end{scope}

    \begin{scope}[shift={(-4,-4)}]
      \coordinate (p1) at (-3,-1.5) {};
      \coordinate (p2) at (-.5,3) {};
      \coordinate (p3) at (.5,-3) {};
      \coordinate (p4) at (3,-.5) {};
      \coordinate (p5) at (intersection of p1--p4 and p2--p3);
      \node[left] at (p1) {$a$};
      \node[above] at (p2) {$b$};
      \node[below] at (p3) {$c$};
      \node[right] at (p4) {$d$};
      \filldraw[greenpoly] (p2)--(p4)--(p3);
      \filldraw[redpoly] (p1)--(p2)--(p3)--cycle;
      \draw[dashed,red!80!black] (p1)--(p5);
      \draw[dashed,green!80!black] (p5)--(p4);
      \draw[redline] (p2)--(p3);
      \draw[redline] (p1) -- node[above left] {$\tau$} (p2);
      \node[reddot] at (p1) {};
      \node[reddot] at (p2) {};
      \node[reddot] at (p3) {};
      \node[bluedot] at (p4) {};
    \end{scope}
        
    \begin{scope}[shift={(4,-4)}]
      \coordinate (p1) at (-3,-1.5) {};
      \coordinate (p2) at (-.5,3) {};
      \coordinate (p3) at (.5,-3) {};
      \coordinate (p4) at (3,-.5) {};
      \coordinate (p5) at (intersection of p1--p4 and p2--p3);
      \node[left] at (p1) {$a$};
      \node[above] at (p2) {$b$};
      \node[below] at (p3) {$c$};
      \node[right] at (p4) {$d$};
      \filldraw[color=green!30] (p1)--(p2)--(p5)--cycle;
      \filldraw[color=green!20] (p1)--(p5)--(p3)--cycle;
      \filldraw[greenpoly] (p2)--(p4)--(p3)--cycle;
      \draw[redline] (p1)--(p2)--(p3)--cycle;
      \draw[green!80!black] (p1)--(p5);
      \draw[dashed,green!80!black] (p5)--(p4);
      \draw[redline] (p2)--(p3);
      \draw[redline] (p1) -- node[above left] {$\partial\tau$} (p2);
      \node[reddot] at (p1) {};
      \node[reddot] at (p2) {};
      \node[reddot] at (p3) {};
      \node[bluedot] at (p4) {};
    \end{scope}
  \end{tikzpicture}
  \caption{Let $\sigma$ be the tetrahedron $abcd$.  The top row shows
    the deformation retraction from $\sigma$ to $\sigma \setminus
    \tau$ when $\tau$ is the edge $ab$.  The bottom row shows the
    deformation retraction when $\tau$ is the triangle
    $abc$.\label{fig:deform}}
\end{figure}

\begin{example}[Removing a proper face]\label{ex:deform}
  Two of these deformations are shown in Figure~\ref{fig:deform}.  In
  the top row $\tau$ is the edge $ab$, $\tau'$ is the edge $cd$ and
  the direction of deformation is the vector from the midpoint of
  $\tau$ to the midpoint of $\tau'$.  In the bottom row, $\tau$ is the
  triangle $abc$, $\tau'$ is the point $d$ and the direction of
  deformation is the vector from the center of $\tau$ to the
  point~$d$.
\end{example}

\section{Flag Complexes}\label{sec:flag}

In this section we prove that the star of a simplex in a flag complex
is contractible (Proposition~\ref{prop:stars-contract}).  This is
surely a well-known fact but since we have been unable to locate a
reference in the literature, we provide an easy proof using the
deformation retractions described in Section~\ref{sec:simplices}.

\begin{defn}[Stars and links]\label{def:star-link}
  Let $\rho$ be a simplex in a simplicial complex $X$. The
  \emph{star of $\rho$} is the smallest subcomplex $\st(\rho)$
  in $X$ which contains every simplex that has a nontrivial
  intersection with $\rho$. We
  distinguish three types of simplices in $\st(\rho)$ depending on
  whether it is contained in $\rho$, disjoint from $\rho$, or neither.
  A simplex $\tau$ disjoint from $\rho$ is a \emph{link simplex} and
  the set of all link simplices form a simplicial complex
  $\link(\rho)$ called the \emph{link of $\rho$}.  A simplex $\sigma$
  that is neither in the link nor contained in $\rho$ is called a
  \emph{connecting simplex}. Note that by the minimality condition in
  the definition of $\st(\rho)$, every link simplex $\tau$ is a
  proper subsimplex of some connecting simplex $\sigma$.
\end{defn}

Recall that a simplical complex $X$ is a \emph{flag complex} when each
complete graph in the $1$-skeleton of $X$ is the $1$-skeleton of a
simplex in $X$.

\begin{lem}[Maximal connecting simplices]\label{lem:max-connect}
  Let $\rho$ be a simplex in a flag complex $X$.  For every simplex
  $\tau \in \link(\rho)$ there is a unique maximal simplex $\sigma \in
  \st(\rho)$ such that $\sigma \cap \link(\rho) = \tau$.  In
  particular, $\sigma' \in \st(\rho)$ and $\sigma' \cap \link(\rho) =
  \tau$ implies $\sigma' \subset \sigma$ for this maximal $\sigma$.
  Moreover, $\sigma$ is a connecting simplex that properly contains
  $\tau$.
\end{lem}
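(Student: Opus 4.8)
The plan is to build the candidate maximal simplex $\sigma$ directly from the vertex set, using the flag condition to guarantee it is actually a simplex of $X$. Given $\tau \in \link(\rho)$, let $V_\rho$ denote the vertex set of $\rho$ and $V_\tau$ the vertex set of $\tau$; since $\tau$ is a link simplex, $V_\rho \cap V_\tau = \varnothing$. First I would consider the set $W$ of all vertices $w$ of $\rho$ together with all vertices of $\tau$; I claim that $W$ spans a simplex $\sigma$ of $X$. To see this, it suffices by the flag property to check that every pair of vertices in $W$ is joined by an edge in $X$. Pairs within $V_\rho$ span edges because $\rho$ is a simplex; pairs within $V_\tau$ span edges because $\tau$ is a simplex; and a pair $\{v, w\}$ with $v \in V_\rho$, $w \in V_\tau$ spans an edge because $\tau \cup \{v\}$ is already known to be a simplex — indeed $\tau$ lies in $\link(\rho)$, so the minimality clause in Definition~\ref{def:star-link} (every link simplex is a proper face of some connecting simplex, and in particular $\tau * v$ is a simplex of $\st(\rho)$ for each vertex $v$ of $\rho$) gives the required edge. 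Then the flag hypothesis promotes the complete graph on $W$ to a genuine simplex $\sigma \in X$.

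Next I would verify the three asserted properties of this $\sigma$. Since $\sigma$ contains a vertex of $\rho$, it has nontrivial intersection with $\rho$, so $\sigma \in \st(\rho)$. Its intersection with $\link(\rho)$ is exactly the face spanned by the vertices of $\sigma$ lying outside $\rho$, which is $V_\tau$, so $\sigma \cap \link(\rho) = \tau$. Because $\sigma$ contains both a vertex of $\rho$ and the nonempty face $\tau$ disjoint from $\rho$, it is neither contained in $\rho$ nor disjoint from $\rho$, hence $\sigma$ is a connecting simplex, and it properly contains $\tau$ since it has the extra vertices from $V_\rho$. For maximality and uniqueness: if $\sigma' \in \st(\rho)$ satisfies $\sigma' \cap \link(\rho) = \tau$, then the vertices of $\sigma'$ split into those in $\rho$ and those not in $\rho$; the latter form a face of $\link(\rho)$ contained in $\tau$ — actually equal to nothing larger than $\tau$ — so the vertices of $\sigma'$ outside $\rho$ are a subset of $V_\tau$, while the vertices of $\sigma'$ inside $\rho$ form a subset of $V_\rho$. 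Hence $V_{\sigma'} \subseteq V_\rho \cup V_\tau = W = V_\sigma$, giving $\sigma' \subseteq \sigma$. This simultaneously establishes that $\sigma$ is the unique maximal such simplex.

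The one point that needs a little care — and the closest thing to an obstacle — is the edge-existence argument for mixed pairs $\{v,w\}$ with $v \in V_\rho$ and $w \in V_\tau$. The cleanest way to extract it is to note that $\tau$ being a link simplex of $\rho$ means, by the minimality in the definition of $\st(\rho)$, that $\tau$ is a proper face of some connecting simplex; unwinding this slightly, for each single vertex $v$ of $\rho$ the join $\tau * v$ is itself a simplex of $X$ (it meets $\rho$ nontrivially, so lies in $\st(\rho)$), and in particular the pair $\{v, w\}$ is an edge. If one prefers not to lean on that unwinding, one can instead invoke it uniformly: $\link(\rho)$ is defined so that its simplices are precisely the faces $\tau$ disjoint from $\rho$ for which $\tau * \rho$ is a simplex of $X$, which immediately supplies every mixed edge and in fact shows $\sigma = \tau * \rho$ directly. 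Either route reduces the whole lemma to a bookkeeping argument on vertex sets plus one application of the flag hypothesis, so I expect the proof to be short.
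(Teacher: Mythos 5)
Your construction of $\sigma$ takes $W = V_\rho \cup V_\tau$, and the crux of your argument is the claim that every vertex of $\rho$ is connected by an edge to every vertex of $\tau$, so that $W$ spans a complete graph and hence (by flagness) a simplex. This claim is false with the paper's definition of the link, and the gap is fatal to your approach. The remark in Definition~\ref{def:star-link} says only that $\tau$ is a proper face of \emph{some} connecting simplex; that simplex need only contain one vertex of $\rho$, so you get one vertex of $\rho$ adjacent to all of $V_\tau$, not all of $V_\rho$. Your fallback --- ``$\link(\rho)$ is defined so that its simplices are precisely the faces $\tau$ disjoint from $\rho$ for which $\tau * \rho$ is a simplex'' --- is a different and inequivalent definition of the link; the paper defines $\link(\rho)$ as the simplices of $\st(\rho)$ that are disjoint from $\rho$, and these two notions do not coincide even in flag complexes.

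A concrete counterexample to your $W$-construction: take the flag complex $X$ on vertices $a,b,c,d$ with edges $ab, ac, ad, cd$ and the single triangle $acd$, and let $\rho = ab$. Then $\st(\rho) = X$ and $\link(\rho) = \{c, d, cd\}$. For $\tau = cd$ your $W$ is $\{a,b,c,d\}$, but there is no edge $bc$, so $W$ spans no simplex. The correct maximal connecting simplex here is $\sigma = acd$. The paper avoids this by defining $V$ to consist of $V_\tau$ together with \emph{only those} vertices of $\rho$ that are adjacent to every vertex of $\tau$, which makes the complete-graph observation genuine, makes flagness deliver the desired $\sigma$, and makes maximality automatic (any $\sigma'$ in the star with $\sigma' \cap \link(\rho) = \tau$ has its $\rho$-vertices adjacent to all of $V_\tau$, hence contained in $V$). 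The ``properly contains $\tau$'' and ``is a connecting simplex'' conclusions then follow from the fact, which you did identify, that at least one such vertex of $\rho$ exists. If you replace your vertex set $W$ with the paper's $V$ and keep the rest of your bookkeeping, the argument goes through.
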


\begin{proof}
  Let $V$ be the set of vertices in $\tau$ together with any vertex of
  $\rho$ that is connected to every vertex of $\tau$ by an edge.  Note
  that the restriction of the $1$-skeleton of $X$ to the vertex set
  $V$ is a complete graph by the definition of $V$ and by the fact
  that both $\tau$ and $\rho$ are simplices.  Because $X$ is a flag
  complex, there is a simplex $\sigma$ with $V$ as its vertex set and
  its maximality properties are immediate.  Finally, by the remark in
  Definition~\ref{def:star-link}, there is some connecting simplex
  $\sigma'$ that contains $\tau$ and by replacing $\sigma'$ by a
  subsimplex if necessary we may also assume that the vertices of
  $\sigma'$ are contained in the set $V$.  Thus at least one vertex in
  $\rho$ is connected to every vertex of $\tau$, which means that
  $\sigma$ is a connecting simplex properly containing $\tau$.
\end{proof}

\begin{defn}[Filtrations]\label{def:filter}
  Let $\rho$ be a simplex in a simplicial complex $X$, let $L =
  \link(\rho)$ and let $S = \st(\rho)$.  For each integer $k$ let
  $L^{(k)}$ be the $k$-skeleton of $L$ and let $S^{(k)}$ be the
  largest subcomplex of $S$ such that $S^{(k)} \cap L =
  L^{(k)}$.  Note that when $k=-1$, $L^{(-1)} = \emptyset$ and
  $S^{(-1)} = \rho$.  These subcomplexes allow us to write $L$ and $S$
  as nested unions $L = \cup_k L^{(k)}$ and $S = \cup_k S^{(k)}$ which
  we call the \emph{natural filtrations} of $L$ and $S$.  We should
  note that when $1 \leq k < \dim(L)$, neither $L^{(k)}$ nor
  $S^{(k)}$ is a flag complex.
\end{defn}

\begin{example}[Filtrations]\label{ex:filter}
  Let $\rho$ be a triangle in the standard triangular tiling of the
  plane.  The natural filtration of the star of $\rho$ is shown in
  Figure~\ref{fig:filter}.  The subcomplex $S^{(1)} = \st(\rho)$ is shown
  on the left, the subcomplex $S^{(0)}$ is shown in the middle and the
  subcomplex $S^{(-1)} = \rho$ is shown on the right.
\end{example}

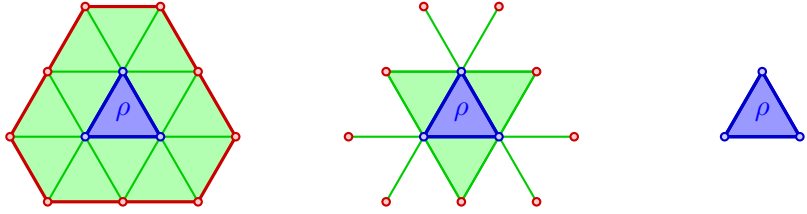
\begin{figure}
  \begin{tikzpicture}
    \tikzstyle{redline}=[very thick,color=red!80!black]
    \tikzstyle{greenline}=[thick,color=green!80!black]
    \tikzstyle{blueline}=[very thick,color=blue!80!black]
    \tikzstyle{bluedot}=[shape=circle,draw,thick,color=blue!80!black,fill=blue!20,inner sep=1pt]
    \tikzstyle{reddot}=[shape=circle,draw,thick,color=red!80!black,fill=red!20,inner sep=1pt]
    \tikzstyle{bluepoly}=[blueline,fill=blue!40,line join=round]
    \tikzstyle{greenpoly}=[thick,color=green!80!black,fill=green!30]
    \begin{scope}[shift={(-4.5,0)}]
      \coordinate (0) at (-0.5,{sqrt(3)/2});
      \coordinate (1) at (0.5,{sqrt(3)/2});
      \coordinate (2) at (0,{sqrt(3)});
      \coordinate (3) at (-1,0);
      \coordinate (4) at (0,0);
      \coordinate (5) at (1,0);
      \coordinate (6) at (1.5,{sqrt(3)/2});
      \coordinate (7) at (1,{sqrt(3)});
      \coordinate (8) at (0.5,{sqrt(3)*3/2});
      \coordinate (9) at (-0.5,{sqrt(3)*3/2});
      \coordinate (10) at (-1,{sqrt(3)});
      \coordinate (11) at (-1.5,{sqrt(3)/2});
      \filldraw[greenpoly] (3)--(4)--(5)--(6)--(7)--(8)--(9)--(10)--(11)--cycle;
      \filldraw[bluepoly] (0)--(1)--(2)--cycle;
      \node () at (0,1.2) {$\textcolor{blue}{\rho}$};
      \draw[greenline] (3)--(8) (4)--(7) (5)--(9) (4)--(10) (6)--(11) (7)--(10);
      \draw[blueline] (0)--(1)--(2)--cycle;
      \draw[redline] (3)--(4)--(5)--(6)--(7)--(8)--(9)--(10)--(11)--cycle;
      \foreach \n in {0,...,2} {\draw (\n) node [bluedot] {};}
      \foreach \n in {3,...,11} {\draw (\n) node [reddot] {};}
    \end{scope}
    
    \begin{scope}[shift={(0,0)}]
      \coordinate (0) at (-0.5,{sqrt(3)/2});
      \coordinate (1) at (0.5,{sqrt(3)/2});
      \coordinate (2) at (0,{sqrt(3)});
      \coordinate (3) at (-1,0);
      \coordinate (4) at (0,0);
      \coordinate (5) at (1,0);
      \coordinate (6) at (1.5,{sqrt(3)/2});
      \coordinate (7) at (1,{sqrt(3)});
      \coordinate (8) at (0.5,{sqrt(3)*3/2});
      \coordinate (9) at (-0.5,{sqrt(3)*3/2});
      \coordinate (10) at (-1,{sqrt(3)});
      \coordinate (11) at (-1.5,{sqrt(3)/2});
      \filldraw[greenpoly] (4)--(7)--(10)--cycle;
      \filldraw[bluepoly] (0)--(1)--(2)--cycle;
      \node () at (0,1.2) {$\textcolor{blue}{\rho}$};
      \draw[greenline] (3)--(8) (4)--(7) (5)--(9) (4)--(10) (6)--(11) (7)--(10);      
      \draw[blueline] (0)--(1)--(2)--cycle;
      \foreach \n in {0,...,2} {\draw (\n) node [bluedot] {};}
      \foreach \n in {3,...,11} {\draw (\n) node [reddot] {};}
    \end{scope}
    
    \begin{scope}[shift={(4,0)}]
      \coordinate (0) at (-0.5,{sqrt(3)/2});
      \coordinate (1) at (0.5,{sqrt(3)/2});
      \coordinate (2) at (0,{sqrt(3)});
      \filldraw[bluepoly] (0)--(1)--(2)--cycle;
      \node () at (0,1.2) {$\textcolor{blue}{\rho}$};
      \draw[blueline] (0)--(1)--(2)--cycle;      
      \foreach \n in {0,...,2} {\draw (\n) node [bluedot] {};}
    \end{scope}
  \end{tikzpicture}
  \caption{The natural filtration of the star of a triangle $\rho$ in
    the standard triangular tiling of the plane.\label{fig:filter}}
\end{figure}

\begin{lem}[Local deformations]\label{lem:local-deform}
  Let $\rho$ be a simplex in a flag complex $X$ and let $S = \cup_k
  S^{(k)}$ be the natural filtration of $S=\st(\rho)$.  For every $k\geq
  0$ there is a deformation retraction from $S^{(k)}$ to $S^{(k-1)}$.
\end{lem}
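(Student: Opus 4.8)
The plan is to assemble the deformation retraction by gluing together one elementary deformation of Proposition~\ref{prop:deform} for each $k$-dimensional simplex of the link.

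I would begin with the combinatorial description of the filtration: a simplex $\mu$ of $S = \st(\rho)$ lies in $S^{(k)}$ precisely when $\mu \cap L$, the face of $\mu$ spanned by its vertices not in $\rho$, has dimension at most $k$. It follows that $S^{(k)}$ is the union of $S^{(k-1)}$ with the simplices $\sigma_\tau$, where $\tau$ ranges over the $k$-simplices of $L = \link(\rho)$ and $\sigma_\tau$ is the unique maximal simplex of $\st(\rho)$ with $\sigma_\tau \cap L = \tau$ furnished by Lemma~\ref{lem:max-connect}: indeed, if $\mu \in S^{(k)}$ but $\mu \notin S^{(k-1)}$ then $\tau := \mu \cap L$ is a $k$-simplex of $L$ and $\mu \subseteq \sigma_\tau$ by maximality. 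Since Lemma~\ref{lem:max-connect} also guarantees that $\tau$ is a proper subsimplex of the connecting simplex $\sigma_\tau$, Proposition~\ref{prop:deform} provides a deformation retraction $r_\tau$ from $\sigma_\tau$ to $\sigma_\tau \setminus \tau$ that fixes $\sigma_\tau \setminus \tau$ throughout.

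Two observations make the gluing work. First, $\sigma_\tau \setminus \tau = \sigma_\tau \cap S^{(k-1)}$: a subsimplex of $\sigma_\tau$ omits a vertex of $\tau$ exactly when its intersection with $L$ is a proper face of $\tau$, hence has dimension less than $k$. Second, for distinct $k$-simplices $\tau$ and $\tau'$ of $L$ we have $\sigma_\tau \cap \sigma_{\tau'} \subseteq S^{(k-1)}$, since $(\sigma_\tau \cap \sigma_{\tau'}) \cap L$ is contained in $\tau \cap \tau'$, a proper face of the $k$-simplex $\tau$. Consequently each $r_\tau$ restricts to the identity on $\sigma_\tau \cap S^{(k-1)}$ and on every overlap $\sigma_\tau \cap \sigma_{\tau'}$, so the maps $r_\tau$ together with the identity on $S^{(k-1)}$ agree on all pairwise intersections of the closed cover $\{S^{(k-1)}\} \cup \{\sigma_\tau\}$ of $S^{(k)}$. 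They therefore glue to a retraction $r\colon S^{(k)} \to S^{(k-1)}$ whose continuity is checked one closed simplex at a time, and the straight-line homotopies of Proposition~\ref{prop:deform}, glued with the constant homotopy on $S^{(k-1)}$, give a homotopy from $\mathrm{id}_{S^{(k)}}$ to $r$ fixing $S^{(k-1)}$ pointwise. This is the required deformation retraction; the case $k = 0$ is the special instance in which $S^{(-1)} = \rho$ and each $\sigma_\tau$ collapses onto a face of $\rho$.

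The only subtle step, and the sole place the flag hypothesis is used, is the compatibility of the local deformations, namely the second observation above together with the uniqueness of $\sigma_\tau$ from Lemma~\ref{lem:max-connect}. Without a canonical maximal simplex carrying each link cell there would be no consistent direction in which to push, and the elementary retractions might clash on overlaps; everything else is routine bookkeeping with barycentric coordinates and the pasting lemma for the weak topology.
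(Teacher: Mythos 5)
Your proof is correct and follows essentially the same strategy as the paper: invoke Lemma~\ref{lem:max-connect} to obtain the unique maximal connecting simplex $\sigma_\tau$ for each $k$-simplex $\tau$ of the link, apply Proposition~\ref{prop:deform} to each, and argue that these local deformations are compatible because distinct $\sigma_\tau$'s overlap only inside $S^{(k-1)}$, where every $r_\tau$ is the identity. Your version spells out the gluing more explicitly (identifying $\sigma_\tau \setminus \tau$ with $\sigma_\tau \cap S^{(k-1)}$ and checking agreement on pairwise intersections), but the underlying argument is the same as the paper's.
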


\begin{proof}
  Let $L^{(k)}$ be the $k$-skeleton of $\link(\rho)$.  For each
  $k$-simplex $\tau$ in $L^{(k)}$ with corresponding maximum
  connecting simplex $\sigma$ as described by
  Lemma~\ref{lem:max-connect}, we associate a deformation retraction
  from $\sigma$ onto the subcomplex $\sigma\setminus\tau$ as in
  Proposition~\ref{prop:deform}.  In fact, we claim that the
  deformation retractions associated to each $k$-simplex in $L^{(k)}$
  are compatible and can be performed simultaneously.  To see this,
  note that the only points that move under the deformation from
  $\sigma$ to $\sigma\setminus \tau$ are those contained in simplices
  that contain all of the vertices of $\tau$ and because $\tau$ is a
  maximal simplex in $L^{(k)}$, the only simplices in
  $S^{(k)}$ that contain $\tau$ are contained in $\sigma$ by
  Lemma~\ref{lem:max-connect}. Hence, the points moved
  by different deformations are pairwise disjoint.
\end{proof}

In Figure~\ref{fig:filter} it is easy to visualize the deformation
retractions between one step in the natural filtration of the star of
$\rho$ and the next.

\begin{prop}[Stars contract]\label{prop:stars-contract}
  If $\rho$ is a simplex in a flag complex $X$, then $\st(\rho)$ is a
  contractible subcomplex.
\end{prop}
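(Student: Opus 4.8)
The plan is to chain together the deformation retractions supplied by Lemma~\ref{lem:local-deform} and then observe that the final space in the filtration is a single simplex, hence contractible. Concretely, write $d = \dim(S)$ where $S = \st(\rho)$, and recall from Definition~\ref{def:filter} that $S = S^{(d)} \supset S^{(d-1)} \supset \cdots \supset S^{(0)} \supset S^{(-1)} = \rho$, with $S^{(k)} = S$ for all $k \geq d$ since $L^{(k)} = L$ once $k$ exceeds $\dim(L)$. Lemma~\ref{lem:local-deform} gives, for each $k$ with $0 \leq k \leq d$, a deformation retraction from $S^{(k)}$ to $S^{(k-1)}$. Composing these finitely many deformation retractions (in the order $k = d, d-1, \ldots, 1, 0$) yields a deformation retraction from $S = S^{(d)}$ onto $S^{(-1)} = \rho$. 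Since $\rho$ is a simplex and therefore contractible (it is convex), and since a space that deformation retracts onto a contractible subspace is itself contractible, we conclude that $\st(\rho)$ is contractible.

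First I would spell out the indexing: set $d = \dim(\st(\rho))$, note $S^{(k)} = \st(\rho)$ for $k \ge d$, and that the filtration stabilizes at the bottom with $S^{(-1)} = \rho$ by the remark in Definition~\ref{def:filter}. Then I would invoke Lemma~\ref{lem:local-deform} once for each $k \in \{0, 1, \ldots, d\}$ to get deformation retractions $r_k \colon S^{(k)} \times [0,1] \to S^{(k)}$ ending in a retraction onto $S^{(k-1)}$. Next I would recall the standard fact that deformation retractions compose: if $A$ deformation retracts onto $B$ and $B$ deformation retracts onto $C$, then $A$ deformation retracts onto $C$ (reparametrize time, using that the first homotopy is stationary on $B \supseteq C$ at its endpoint). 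Applying this inductively collapses the whole tower onto $\rho$. Finally, $\rho$ is contractible, and contractibility passes up along a deformation retraction, so $\st(\rho)$ is contractible.

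There is essentially no obstacle here — the real content was front-loaded into Lemma~\ref{lem:local-deform} and Proposition~\ref{prop:deform}. The only point requiring any care is bookkeeping: making sure the filtration is finite (which holds because $\st(\rho)$ is a finite-dimensional simplicial complex, or at least because the relevant deformations only move finitely many cells near any given simplex), and that the composition of finitely many deformation retractions is again a deformation retraction onto the terminal subcomplex. Both are routine, so the proof is short: state the filtration, cite Lemma~\ref{lem:local-deform} for each step, compose, and note $\rho$ is contractible.

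\begin{proof}
  Let $d = \dim(\st(\rho))$ and consider the natural filtration
  $\st(\rho) = S^{(d)} \supseteq S^{(d-1)} \supseteq \cdots \supseteq
  S^{(0)} \supseteq S^{(-1)} = \rho$ from Definition~\ref{def:filter},
  where we have used that $S^{(k)} = \st(\rho)$ for all $k \geq d$ and
  that $S^{(-1)} = \rho$.  By Lemma~\ref{lem:local-deform}, for each
  $k$ with $0 \leq k \leq d$ there is a deformation retraction from
  $S^{(k)}$ to $S^{(k-1)}$.  Since a composition of finitely many
  deformation retractions is again a deformation retraction onto the
  final subspace, composing these $d+1$ deformation retractions (in
  the order $k = d, d-1, \ldots, 0$) produces a deformation retraction
  from $\st(\rho) = S^{(d)}$ onto $S^{(-1)} = \rho$.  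The simplex
  $\rho$ is convex and hence contractible, and any space that
  deformation retracts onto a contractible subspace is itself
  contractible.  Therefore $\st(\rho)$ is contractible.
\end{proof}
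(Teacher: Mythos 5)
Your proof is correct and takes essentially the same approach as the paper: both use the natural filtration from Definition~\ref{def:filter} and iteratively apply Lemma~\ref{lem:local-deform} to collapse $\st(\rho)$ down to $\rho$. The one small difference is in the final bookkeeping. You compose the finitely many deformation retractions $S^{(d)} \to S^{(d-1)} \to \cdots \to S^{(-1)} = \rho$ to produce a single deformation retraction onto $\rho$, implicitly using that $\st(\rho)$ is finite-dimensional so the tower terminates. The paper instead observes that each $S^{(k)}$ is contractible and concludes that $\st(\rho)$, as a nested union of contractible subcomplexes, is contractible. Your composition argument is arguably the cleaner route in the finite-dimensional case (which covers all the applications in the paper), since it produces an explicit deformation retraction and avoids appealing to the more delicate fact that a nested union of contractible complexes along simplicial inclusions is contractible. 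Either way, the substantive content was already front-loaded into Lemma~\ref{lem:local-deform}, as you note.
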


\begin{proof}
  Let $\st(\rho) = S = \cup_k S^{(k)}$ be the natural filtration of
  the star of $\rho$.  By iteratively applying
  Lemma~\ref{lem:local-deform}, we see that each subcomplex $S^{(k)}$
  deformation retracts to the simplex $S^{(-1)} = \rho$ which is
  itself contractible.  Since each $S^{(k)}$ is contractible, the star
  of $\rho$ is a nested union of contractible simplicial complexes and
  thus contractible.
\end{proof} 

Easy counterexamples show that the flag complex requirement is
crucial.

\begin{example}[Noncontracting stars]\label{ex:noncontracting}
  If $X$ is the boundary of a simplex with at least $3$ vertices and
  $\rho$ is any proper subsimplex containing at least $2$
  vertices, then the star of $\rho$ is all of $X$, which is not
  contractible since it is homeomorphic to a sphere.
\end{example}

\section{Noncrossing Hypertrees}\label{sec:ncht}

In this section we apply Proposition~\ref{prop:stars-contract} to the
complex of noncrossing hypertrees.  We follow \cite{mccammond-ncht}
and call two edges, or more generally two polygons, \emph{weakly
  noncrossing} when they are disjoint or only intersect at a single
common vertex.  We begin with the notion of a noncrossing tree.

\begin{defn}[Noncrossing trees]\label{def:nc-trees}
  A \emph{tree} is a connected simplicial graph $T$ with no nontrivial
  cycles.  A \emph{noncrossing tree} is a tree $T$ whose vertices are
  the vertices of a convex polygon $P$, each edge is the convex hull
  of its endpoints and distinct edges of $T$ are weakly noncrossing.
\end{defn}

Noncrossing trees are examples of noncrossing hypertrees.

\begin{defn}[Noncrossing hypertrees]\label{def:ncht}
  A \emph{hypergraph} consists of a set $V$ of vertices and a
  collection of subsets of $V$ of size at least $2$ whose elements are
  called \emph{hyperedges}.  A \emph{hypertree}, roughly speaking, is
  a connected hypergraph with no hypercycles.  See
  \cite{mccammond-ncht} for a precise definition.  A \emph{noncrossing
    hypertree} is a hypertree whose vertices are those of a convex
  polygon $P$, each hyperedge is drawn as the convex hull of the
  vertices it contains, and distinct hyperedges are weakly
  noncrossing.  The set of all noncrossing hypertrees on $P$ is a
  poset under refinement.  In particular, one hypertree is below
  another if every hyperedge of the first is a subset of some
  hyperedge of the second.  In this ordering the noncrossing trees are
  its minimal elements and the noncrossing hypertree with only one
  hyperedge containing all of the vertices is its unique maximum
  element.
\end{defn}

The lefthand side of Figure~\ref{fig:ncht-poly} is a noncrossing
hypertree.  Noncrossing hypertrees are in bijection with certain types
of polygon dissections.

\begin{defn}[Polygon dissections]\label{def:poly}
  Let $P$ be a convex polygon in the plane.  A \emph{diagonal of $P$}
  is the convex hull of two vertices of $P$ that are not adjacent in
  the boundary cycle of the polygon.  Two diagonals are
  \emph{noncrossing} when they are weakly noncrossing as edges and a
  collection of pairwise noncrossing diagonals is called a
  \emph{dissecton of $P$}.  A dissection of an even-sided polygon $P$
  is itself \emph{even-sided} if the diagonals partition $P$ into
  smaller even-sided polygons.  An example is shown on the righthand
  side of Figure~\ref{fig:ncht-poly}.  The collection of all
  even-sided dissections of an even-sided polygon $P$ can be turned
  into a poset by declaring that one dissection is below another if
  the set of diagonals defining the first is a subset of the set of
  diagonals defining the second.  Since ideals in this poset are
  boolean lattices, it corresponds to a simplicial complex.  See
  \cite{mccammond-ncht} for further details.
\end{defn}

There is a natural bijection between noncrossing hypertrees in an
$n$-gon and the even-sided dissections of a $2n$-gon.

\begin{thm}[Noncrossing hypertrees and polygon dissections]\label{thm:ncht-poly}
  There is a natural order-reversing bijection between the poset of
  noncrossing hypertrees on a fixed number of vertices and the poset
  of dissections of an even-sided polygon with twice as many vertices
  into even-sided subpolygons through the addition of pairwise
  noncrossing diagonals.
\end{thm}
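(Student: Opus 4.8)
The plan is to realize the bijection through a geometric ``doubling'' of the polygon, to check that it is well defined and invertible by an induction that strips off one hyperedge (equivalently, one subpolygon) at a time, and to read the order-reversal off the fact that both posets are graded with matching ranks. Fix a convex $n$-gon $P$ with vertices $p_1,\dots,p_n$ and form a convex $2n$-gon $Q$ by subdividing each boundary edge of $P$ exactly once; thus $Q$ has vertices $q_1,m_1,q_2,m_2,\dots,q_n,m_n$ in cyclic order, where $q_i$ sits at $p_i$ and $m_i$ lies in the relative interior of the boundary edge $[p_i,p_{i+1}]$ (indices mod $n$). First I would record the numerical match: a noncrossing hypertree on $P$ with hyperedges of sizes $k_1,\dots,k_r$ satisfies $\sum_i(k_i-1)=n-1$, while an even-sided dissection of $Q$ into even subpolygons of sizes $2\ell_1,\dots,2\ell_s$ uses exactly $s-1$ noncrossing diagonals and satisfies $\sum_j(\ell_j-1)=n-1$; for noncrossing hypertrees the bipartite incidence graph is in fact a tree, which is how one sees both the identity $\sum_i(k_i-1)=n-1$ and the existence of leaf hyperedges used below. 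Hence $r=s$, and the bijection will carry a size-$k$ hyperedge to a $2k$-gon piece and each ``internal incidence'' of the hypertree to one diagonal of $Q$.

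\emph{The map, its inverse, and why it is well defined.} Given a noncrossing hypertree $\tau$, define a dissection $\Phi(\tau)$ of $Q$ by \emph{thickening} $\tau$: inflate each hyperedge $h$ on $k$ vertices to a $2k$-gon piece containing the ``old'' vertices $q_i$ with $i\in h$ together with $k$ of the ``new'' vertices $m_i$, with the pieces arranged so that two of them are either disjoint or meet along a single diagonal of $Q$, the latter precisely when the two hyperedges share a vertex of $P$. The cleanest way I know to pin down which $m_i$ belongs to which piece, and simultaneously to prove that $\Phi(\tau)$ genuinely is an even-sided dissection (even pieces, pairwise noncrossing diagonals, tiling $Q$), is induction on $r$. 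When $r\geq 2$ the incidence-tree argument produces a \emph{leaf hyperedge} $h$: one meeting the other hyperedges only at a single shared vertex $v$. Deleting $h$ and its $|h|-1$ private vertices yields a noncrossing hypertree $\tau'$ on a smaller polygon $P'$, and the doubled polygon $Q$ of $P$ is obtained from the doubled polygon $Q'$ of $P'$ by gluing on a $2|h|$-gon along one diagonal incident to the doubled copy of $v$. One then sets $\Phi(\tau) := \Phi(\tau')$ together with that extra diagonal; it cannot cross the diagonals of $\Phi(\tau')$, which all lie on the $Q'$ side, and the weakly-noncrossing hypothesis on $\tau$ is exactly what makes the leaf excise cleanly and forces the inserted chord to have non-adjacent endpoints in $Q$, hence to be an honest diagonal. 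Running the recursion in reverse — repeatedly collapsing a subpolygon that meets the rest of the dissection in a single diagonal (a leaf of the dual tree) and recording a hyperedge — produces a map $\Psi$ from even-sided dissections of $Q$ back to noncrossing hypertrees on $P$, and since the two recursions invert each other step by step, $\Phi$ and $\Psi$ are mutually inverse bijections. Equivariance under the relabelings of the vertices induced by symmetries of $P$ is immediate from the construction, and this is the sense in which the bijection is ``natural''.

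\emph{Order-reversal.} Both posets are graded: the dissection poset by number of diagonals, and the noncrossing-hypertree poset by $(\text{number of hyperedges})-1$, which under $\Phi$ coincides with the number of diagonals of the image. Under the bijection the covering moves match up in both directions: merging two hyperedges of $\tau$ that share a vertex — a cover going \emph{up} in the hypertree poset — corresponds exactly to deleting the common diagonal of the two associated pieces of $\Phi(\tau)$, a cover going \emph{down} in the dissection poset, and conversely. A bijection of graded posets that carries covering relations to reversed covering relations in both directions is an order-reversing isomorphism; in particular the unique minimal dissection (no diagonals) corresponds to the unique maximal hypertree (a single hyperedge), and the maximal dissections correspond to the noncrossing trees.

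\emph{Main obstacle.} The crux is the well-definedness of $\Phi$, i.e.\ showing that the thickened pieces really are pairwise weakly noncrossing even subpolygons that tile $Q$; everything else — the numerical bookkeeping, the inverse map, the grading, and the order statement — is routine once that geometric picture is secured. The leaf-peeling induction is precisely the device that makes it tractable, replacing a single global noncrossing condition by a controlled one-diagonal-at-a-time insertion that is governed by the tree structure underlying $\tau$.
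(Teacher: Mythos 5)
The paper does not actually prove this theorem: the proof is a one-line citation to McCammond's preprint, and the only indication in the text of how the bijection works is the informal sketch just after the theorem (color the $2n$-gon's vertices alternately black and white; to pass from a dissection to a hypertree, take the convex hull of the black vertices in each piece; to pass back, add a white dot between each pair of consecutive vertices of $P$ and insert one diagonal for each adjacent pair of hyperedges meeting at a common vertex). Your proposal supplies an actual argument where the paper defers, and it is consistent with that sketch read in reverse: your ``thickening'' of a hyperedge to a $2k$-gon is the inverse of ``take the black vertices in a $2k$-gon piece,'' and your leaf-peeling recursion is a clean device for verifying well-definedness and invertibility that the paper does not spell out. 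The order-reversal argument (merges of hyperedges correspond to deletions of diagonals, so covers match with reversed direction) is also correct.

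The one step you should not leave as an assertion is the geometric claim that drives the induction, namely that ``the weakly-noncrossing hypothesis is exactly what makes the leaf excise cleanly.'' That is precisely where the noncrossing hypothesis is doing all the work, so it deserves a proof. Concretely you need two facts. First, every noncrossing hypertree with at least two hyperedges has a leaf hyperedge $h$, one meeting the union of the other hyperedges in a single vertex $v$: root the bipartite incidence tree at a vertex-node and take a hyperedge-node of maximum depth; all of its children are then tree-leaves, so it meets the rest only at its parent vertex. Second, the private vertices of such an $h$ form a contiguous boundary arc of $P$ adjacent to $v$ and lying entirely on one side of $v$: if a vertex $w\notin h$ sat strictly between two private vertices of $h$, or if private vertices lay on both sides of $v$, then any hyperedge reaching $w$ (or joining the two sides) would either be disconnected from the rest or cross $\operatorname{conv}(h)$, contradicting connectivity or the weakly-noncrossing condition. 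Only with both of these in place does the excised region become a genuine $2|h|$-gon cut off by a single chord from $q_v$ to a midpoint, which is the base case your recursion needs.
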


\begin{proof}
   \cite[Theorem~3.4]{mccammond-ncht}.
\end{proof}

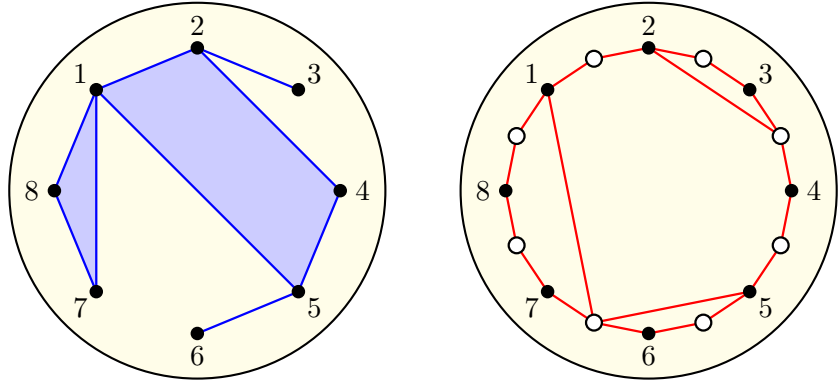
\begin{figure}
  \begin{tikzpicture}
    \begin{scope}[shift={(-3,0)},BluePoly]
      \node () [disk,minimum size=5cm] {};
      \foreach \n in {1,...,8} {
	\coordinate (\n) at (180+\n*-45:1.9cm) {};
	\node[black] () at (180+\n*-45:2.2cm) {\n};
      }
      \filldraw (1)--(2)--(4)--(5)--cycle;
      \filldraw (1)--(8)--(7)--cycle;
      \draw (2)--(3) (5)--(6);
      \foreach \n in {1,...,8} {\draw (\n) node [dot] {};}
    \end{scope}
    
    \begin{scope}[shift={(3,0)},BluePoly]
      \node () [disk,minimum size=5cm] {};
      \foreach \n in {1,...,8} {
	\coordinate (\n) at (180+\n*-45:1.9cm) {};
	\node[black] () at (180+\n*-45:2.2cm) {\n};
      }
      \foreach \n in {9,...,16} {\coordinate (\n) at (180-22.5+\n*-45:1.9cm) {};}
      \foreach \n in {1,...,16} {\draw[RedPoly] (\n*22.5:1.9cm)--(\n*22.5+22.5:1.9cm);}
      \draw[RedPoly] (11)--(2);
      \draw[RedPoly] (14)--(5);
      \draw[RedPoly] (14)--(1);
      \foreach \n in {1,...,8} {\draw (\n) node [dot] {};}
      \foreach \n in {9,...,16} {\draw[color=black,fill=white] (\n) circle (3pt);}
    \end{scope}
  \end{tikzpicture}
  \caption{A noncrossing hypertree with $8$ vertices and the
    corresponding even-sided dissection of a
    $16$-gon.\label{fig:ncht-poly}}
\end{figure}

\begin{example}[Hypertrees and dissections]\label{ex:ncht-poly} 
  The rough idea behind Theorem~\ref{thm:ncht-poly} is easy to
  describe.  Given an even-sided subdivision of a $2n$-gon such as the
  one shown on the righthand side of Figure~\ref{fig:ncht-poly}, we
  alternately color the vertices black and white and then create a
  hypertree using the convex hulls of the black vertices in each
  even-sided subpolygon.  In the other direction we add a white dot in
  between every pair of vertices of the $n$-gon and then includes a
  diagonal corresponding to each pair of hyperedges that share a
  vertex and are adjacent in the linear ordering of hyperedges that
  share that vertex.  The other end of the diagonal is the unique
  white dot to which it can be connected without crossing a hyperedge.
\end{example}

\begin{defn}[Noncrossing hypertree complex]\label{def:ncht-complex}
  Let $P$ be a convex polygon.  The identification given in
  Theorem~\ref{thm:ncht-poly} can be used to define a simplicial
  complex $X$ called the \emph{noncrossing hypertree complex of $P$}.
  Its simplices are the noncrossing hypertrees on $P$ with more than
  one hyperedge.  The vertices of $X$ are labeled by the noncrossing
  hypertrees with exactly two hyperedges and these correspond to
  polygonal dissections with only one diagonal.  At the other extreme,
  the maximal simplices of $X$ are labeled by noncrossing trees.
\end{defn}

From the polygon dissection viewpoint, the following result is
immediate.

\begin{lem}[Flag]\label{lem:flag}
  The noncrossing hypertree complex is a flag complex.
\end{lem}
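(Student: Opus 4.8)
The plan is to argue entirely on the polygon dissection side of the bijection in Theorem~\ref{thm:ncht-poly}, where the flag condition becomes transparent. Fix the even-sided polygon $Q$ with $2n$ vertices associated to the $n$-gon $P$. Under the correspondence, the vertices of the noncrossing hypertree complex $X$ are the single-diagonal even-sided dissections of $Q$, that is, the diagonals $d$ of $Q$ whose endpoints are separated by an odd number of boundary edges (equivalently, the diagonals that cut $Q$ into two even-sided pieces); call such a diagonal \emph{balanced}. More generally, a set $\{d_1,\dots,d_m\}$ of pairwise noncrossing diagonals labels a simplex of $X$ whose vertex set is exactly $\{d_1,\dots,d_m\}$ precisely when the resulting dissection of $Q$ has all of its pieces even-sided.

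The key elementary observation, which makes everything depend only on pairs of vertices, is a two-coloring remark: color the vertices of $Q$ alternately black and white, and note that a diagonal is balanced if and only if its two endpoints receive opposite colors. Consequently, in any dissection of $Q$ by pairwise noncrossing balanced diagonals, every region has the feature that any two cyclically consecutive boundary vertices of that region, joined either by an original edge of $Q$ or by one of the balanced diagonals, carry opposite colors. A properly two-colored cycle has even length, so every region is even-sided. In short, a pairwise noncrossing family of balanced diagonals automatically determines an even-sided dissection of $Q$.

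With this in hand the flag property is immediate. Suppose $v_1,\dots,v_m$ are vertices of $X$ that pairwise span edges, and let $d_1,\dots,d_m$ be the corresponding diagonals of $Q$. Each $d_i$ is balanced because $v_i$ is a vertex of $X$, i.e. $\{d_i\}$ is already an even-sided dissection; and for each $i\neq j$ the hypothesis that $v_i$ and $v_j$ span an edge says that $\{d_i,d_j\}$ is an even-sided dissection, so in particular $d_i$ and $d_j$ are noncrossing. Thus $\{d_1,\dots,d_m\}$ is a pairwise noncrossing family of balanced diagonals, hence by the previous paragraph an even-sided dissection of $Q$, hence a simplex of $X$ with vertex set exactly $\{v_1,\dots,v_m\}$. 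This is precisely the assertion that $X$ is a flag complex.

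The only point needing genuine thought is the two-coloring observation, and even that is short; the remaining work is just unwinding the correspondence of Theorem~\ref{thm:ncht-poly}. In writing this up I would also record the companion fact that a diagonal of \emph{any} even-sided dissection is balanced (count the boundary edges on each side of the diagonal as a sum of perimeters of even-sided regions), since this is what guarantees that the down-set of an even-sided dissection in the dissection poset is the full boolean lattice on its diagonals, and hence that simplices of $X$ may be identified with their vertex sets without ambiguity.
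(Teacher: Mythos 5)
Your proof is correct and follows essentially the same route as the paper: both work on the polygon-dissection side of Theorem~\ref{thm:ncht-poly}, observe that vertices are single diagonals and edges are noncrossing pairs, and conclude that a pairwise noncrossing family of diagonals assembles into a dissection that labels a simplex. The one place you go further is worth flagging: the paper simply asserts that overlaying pairwise noncrossing diagonals yields a dissection corresponding to a noncrossing hypertree, while you supply the small but genuinely needed verification that such a dissection is automatically \emph{even-sided}, via the two-coloring observation that balanced diagonals join opposite-colored vertices so every region's boundary cycle is properly two-colored and hence of even length. Your closing remark about why an even-sided dissection's down-set is a full boolean lattice is likewise a detail the paper leaves implicit in Definition~\ref{def:poly}. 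So: same approach, with the gaps filled in.
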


\begin{proof}
  The vertices correspond to subdivisions containing a single diagonal
  and two vertices are connected by an edge iff the corresponding
  diagonals are noncrossing.  In particular, any complete graph in the
  $1$-skeleton has vertices labeled by dissections with diagonals that
  can be overlaid to obtain a new polygon dissection which contains
  all of them and this dissection corresponds to the noncrossing
  hypertree that labels the simplex with this complete graph as its
  $1$-skeleton.
\end{proof}

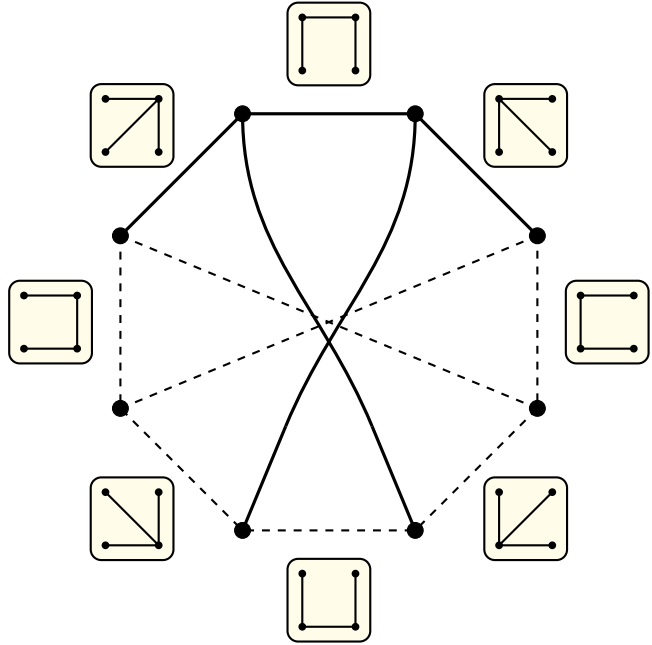
\begin{figure}
  \begin{tikzpicture}
    \begin{scope}[shift={(3,0)}]
      \foreach \n in {1,...,8} {\coordinate (\n) at (22.5+\n*45:3cm) {};}
      \foreach \n in {9,...,12} {\coordinate (\n) at (157.5 + \n*45:1.5cm) {};}
      \draw[very thick] (8)--(1)--(2)--(3);
      \draw[thick,dashed] (3)--(4)--(5)--(6)--(7)--(8);
      \draw[very thick] (1) to [out=-90,in=67.5] (10);
      \draw[very thick] (2) to [out=-90,in=112.5] (11);
      \draw[very thick] (10) to (5);
      \draw[very thick] (11) to (6);
      \draw[thick,dashed] (3) to (7);
      \draw[thick,dashed] (4) to (8);
      \foreach \n in {1,...,8} {\coordinate (x\n) at (45*\n:3.7cm) {};\node[plate] () at (x\n) {};}
      \foreach \n in {1,...,8} {\draw[fill=black] (\n) circle (3pt);}
      \begin{scope}[thick]
	\begin{scope}[shift={(x1)}] \makepoints \draw (2)--(1)--(4) (1)--(3); \drawpoints \end{scope}
	\begin{scope}[shift={(x2)}] \makepoints \draw (4)--(1)--(2)--(3); \drawpoints \end{scope}
	\begin{scope}[shift={(x3)}] \makepoints \draw (1)--(2)--(3) (2)--(4); \drawpoints \end{scope}
	\begin{scope}[shift={(x4)}] \makepoints \draw (1)--(2)--(3)--(4); \drawpoints \end{scope}
	\begin{scope}[shift={(x5)}] \makepoints \draw (4)--(3)--(2) (1)--(3); \drawpoints \end{scope}
	\begin{scope}[shift={(x6)}] \makepoints \draw (1)--(4)--(3)--(2); \drawpoints \end{scope}
	\begin{scope}[shift={(x7)}] \makepoints \draw (1)--(4)--(3) (2)--(4); \drawpoints \end{scope}
	\begin{scope}[shift={(x8)}] \makepoints \draw (2)--(1)--(4)--(3); \drawpoints \end{scope}
      \end{scope}
    \end{scope}
  \end{tikzpicture}
  \caption{The noncrossing hypertree complex $X$ of a square $P$.  The
    edges of $X$ are dashed when the tree $T$ labeling the edge
    contains the boundary edge $e$ along the bottom of the square $P$
    and solid when $T$ does not contain $e$. The top edge of the
    figure is the edge labeled by the tree $T_e$.\label{fig:ncht4}}
\end{figure}

Let $P$ be a convex polygon.  We call an edge $e$ connecting two
vertices of $P$ a \emph{boundary edge} if it is contained in
the boundary of $P$.

\begin{example}[Unused boundary edge]
  Let $P$ be a square with horizontal and vertical sides, let $e$ be
  the bottom edge of $P$ and let $X$ be the noncrossing hypertree
  complex of $P$.  In this case the complex $X$ is the nonplanar graph
  with $8$ vertices and $12$ edges shown in Figure~\ref{fig:ncht4}.
  The edges of $X$ are labeled by noncrossing trees and the labels for
  $8$ of these edges are shown.  The remaining $4$ edges in $X$ are
  labeled by noncrossing trees that look like an $N$ or a $Z$ up to
  rotation and/or reflection.  An edge has been drawn as a dashed line
  when its label contains the boundary edge $e$ and it has been drawn
  as a solid line when its label does not contain $e$.  Let $T_e$ be
  the tree formed by the $3$ sides of $P$ other than $e$;  $T_e$ is the
  label for the edge along the top of the graph $X$, and the star of
  this edge is depicted as the subgraph formed by the solid edges.
\end{example}

The noncrossing trees that do not contain a particular boundary edge
$e$ always form a subcomplex that is the star of a simplex.

\begin{lem}[Unused boundary edge]\label{lem:star}
  Let $P$ be a convex polygon and let $X$ be the noncrossing hypertree
  complex of $P$.  If $e$ is a boundary edge of $P$ and $T_e$ is the
  noncrossing tree consisting of all boundary edges other than $e$,
  then the star of the simplex labeled by $T_e$ in $X$ contains the
  simplex labeled by a noncrossing tree $T$ if and only if $T$ does
  not contains the edge $e$.
\end{lem}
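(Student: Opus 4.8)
The plan is to argue entirely in the language of the hypertree poset, invoking Theorem~\ref{thm:ncht-poly} only to record the face relation of $X$: since the bijection there is order-reversing and the face relation among polygon dissections is inclusion of diagonal sets, $\sigma_{H'}$ is a face of $\sigma_{H}$ in $X$ if and only if $H'$ is a coarsening of $H$, i.e.\ every hyperedge of $H$ is contained in some hyperedge of $H'$. Normalize so that $P$ has vertices $1,2,\dots,n$ in cyclic order with $e=\{1,n\}$, so that $T_e$ is the boundary path with edges $\{1,2\},\{2,3\},\dots,\{n-1,n\}$. Since $T$ is a noncrossing tree, $\sigma_T$ is a maximal simplex of $X$; hence if $\sigma_T\in\st(\sigma_{T_e})$ then $\sigma_T$ is a face of some simplex meeting $\sigma_{T_e}$, and by maximality that simplex is $\sigma_T$ itself, so $\sigma_T$ meets $\sigma_{T_e}$. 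The converse is immediate from the definition of the star, so $\sigma_T\in\st(\sigma_{T_e})$ if and only if $\sigma_T$ and $\sigma_{T_e}$ share a vertex of $X$.

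The next step is to describe the vertices of $\sigma_{T_e}$. These are the noncrossing hypertrees with exactly two hyperedges that coarsen $T_e$, and I claim they are precisely the hypertrees $H_k=\{\{1,\dots,k\},\{k,\dots,n\}\}$ with $2\le k\le n-1$. Each $H_k$ is a noncrossing hypertree, since its two hyperedges are the convex hulls of complementary boundary arcs of $P$ meeting only at the vertex $k$, and $H_k$ visibly coarsens $T_e$. Conversely, if a hypertree $H=\{A,B\}$ with two hyperedges coarsens $T_e$, then $A$ and $B$ meet in a single vertex $k$ and each boundary edge $\{i,i+1\}$ lies in $A$ or in $B$, from which a short check forces $A=\{1,\dots,k\}$ and $B=\{k,\dots,n\}$. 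Combined with the first paragraph, $\sigma_T\in\st(\sigma_{T_e})$ if and only if some $H_k$ coarsens $T$; and unwinding the definition of coarsening, $H_k$ coarsens $T$ exactly when no edge $\{a,b\}$ of $T$ has $a<k<b$, i.e.\ when $k$ does not lie strictly between the endpoints of any edge of $T$.

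It remains to prove the combinatorial heart of the statement: a noncrossing tree $T$ on $\{1,\dots,n\}$ contains the edge $e=\{1,n\}$ if and only if every $k\in\{2,\dots,n-1\}$ lies strictly between the endpoints of some edge of $T$. One direction is trivial, since $e$ itself has $1<k<n$ for every such $k$. For the other, I would pick an edge $\{a^{*},b^{*}\}$ of $T$ with $a^{*}<b^{*}$ maximizing $b^{*}-a^{*}$, and show that no edge $\{a,b\}$ of $T$ has $a<a^{*}<b$: such an edge would either satisfy $b\ge b^{*}$, whence $b-a>b^{*}-a^{*}$, contradicting maximality; or satisfy $a<a^{*}<b<b^{*}$, in which case the chords $\{a,b\}$ and $\{a^{*},b^{*}\}$ interleave in the cyclic vertex order and hence cross, contradicting that $T$ is noncrossing. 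The same argument shows that no edge $\{a,b\}$ of $T$ has $a<b^{*}<b$. Consequently, under the assumption that every $k\in\{2,\dots,n-1\}$ lies strictly between the endpoints of some edge of $T$, neither $a^{*}$ nor $b^{*}$ can lie in $\{2,\dots,n-1\}$; since $1\le a^{*}<b^{*}\le n$, this forces $a^{*}=1$ and $b^{*}=n$, so $e\in T$. This proves the contrapositive, hence the lemma.

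I expect the main obstacle to be the choice of the right feature of $T$ in this last step: using the single longest edge of $T$ is what makes the maximality-versus-crossing dichotomy collapse to one clean case analysis, and once that is in place the rest is routine bookkeeping with the refinement order and with the definition of the star.
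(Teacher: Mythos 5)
Your proof is correct, and it follows a genuinely different route from the paper's. The paper's argument stays entirely inside the polygon-dissection picture and is essentially immediate: $T$ omits $e$ if and only if the corresponding dissection of the $2n$-gon has some diagonal ending at the white dot $w_e$ between the endpoints of $e$, and the set of \emph{all} diagonals through $w_e$ is precisely the dissection corresponding to $T_e$, so ``$T$ omits $e$'' translates directly into ``$\sigma_T$ and $\sigma_{T_e}$ share a vertex.'' You instead use the dissection correspondence only once, to record the face relation on $X$ as coarsening of hypertrees, and then work combinatorially in the hypertree poset: you identify the vertices of $\sigma_{T_e}$ as the two-block hypertrees $H_k=\{\{1,\dots,k\},\{k,\dots,n\}\}$, reduce the lemma to the statement that $T$ contains $\{1,n\}$ if and only if every interior vertex $k$ lies strictly between the endpoints of some edge of $T$, and prove that by a clean extremal argument on the edge of $T$ of maximal span (using the noncrossing condition to rule out nesting violations). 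The paper's proof buys brevity from the white-dot observation; yours buys self-containment and, as a byproduct, an elementary characterization of when a noncrossing tree on $\{1,\dots,n\}$ uses the long boundary edge, which could stand on its own. Both are valid; yours is longer but exposes more of the combinatorial structure.
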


\begin{proof}
  Let $T$ is a noncrossing tree in $P$.  First note that $T$ does not
  contain the boundary edge $e$ if and only if the 
  corresponding polygon
  dissection contains a diagonal with one endpoint at the white dot
  between the endpoints of $e$.  Moreover, the set of all of the
  diagonals ending at this white dot are pairwise noncrossing and form
  the polygon dissection that corresponds to the tree $T_e$.  And
  since single diagonals label the vertices of the noncrossing
  hypertree complex, this means that $T$ does not contain the boundary
  edge $e$ if and only if the simplex labeled $T$ contains a vertex of the
  simplex labeled $T_e$, which is equivalent to the simplex labeled $T$ 
  being in the star of the simplex labeled $T_e$.
\end{proof}

We are now ready to apply Proposition~\ref{prop:stars-contract}.

\begin{thm}[Unused boundary edge]\label{thm:ncht-edge}
  The simplices of the noncrossing hypertree complex labeled by
  noncrossing trees that omit a fixed boundary edge $e$ form a
  contractible subcomplex.
\end{thm}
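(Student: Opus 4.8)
The plan is to recognize the subcomplex in the statement as the star of a single simplex in a flag complex and then to invoke Proposition~\ref{prop:stars-contract}. First I would fix the boundary edge $e$, let $\rho$ denote the (maximal) simplex of the noncrossing hypertree complex $X$ that is labeled by the noncrossing tree $T_e$ consisting of all boundary edges of $P$ other than $e$, and recall from Lemma~\ref{lem:flag} that $X$ is a flag complex. Proposition~\ref{prop:stars-contract} then immediately yields that $\st(\rho)$ is contractible, so the whole theorem reduces to the claim that the subcomplex described in the statement is exactly $\st(\rho)$.

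For that identification I would use Lemma~\ref{lem:star}, which says that a simplex of $X$ labeled by a noncrossing tree $T$ lies in $\st(\rho)$ precisely when $T$ omits $e$. To promote this to an equality of subcomplexes, note that the maximal simplices of $X$ are exactly the noncrossing trees on $P$ (Definition~\ref{def:ncht-complex}) and that every simplex of the finite complex $X$ is a face of a maximal one; moreover, if a simplex meets $\rho$ nontrivially then so does any simplex containing it, so the maximal simplices of $\st(\rho)$ are precisely the maximal simplices of $X$ that meet $\rho$. By Lemma~\ref{lem:star} these are exactly the noncrossing trees omitting $e$. Hence $\st(\rho)$ is the subcomplex generated by the noncrossing trees omitting $e$, which is the subcomplex named in the theorem; in particular that collection of simplices really does form a subcomplex. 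Combining this with the contractibility of $\st(\rho)$ finishes the argument.

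Since essentially no computation is left once these pieces are in place, I do not expect a genuine obstacle; the substantive work has already been done in the general fact about stars in flag complexes (Proposition~\ref{prop:stars-contract}), in the flag property of $X$ (Lemma~\ref{lem:flag}), and in the translation between containing the boundary edge $e$ and membership in the star (Lemma~\ref{lem:star}). The only point meriting a sentence of care is the bookkeeping in the previous paragraph --- checking that the downward closure of the trees omitting $e$ introduces nothing outside $\st(\rho)$ and omits nothing inside it --- and this follows directly from the definition of the star as a subcomplex together with the description of its maximal simplices.
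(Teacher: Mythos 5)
Your proof is correct and follows exactly the same route as the paper: identify the subcomplex as the star of the simplex labeled by $T_e$ via Lemma~\ref{lem:star}, observe that $X$ is a flag complex by Lemma~\ref{lem:flag}, and invoke Proposition~\ref{prop:stars-contract}. The extra bookkeeping you include about maximal simplices is fine but is treated as immediate from Lemma~\ref{lem:star} in the paper's one-line argument.
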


\begin{proof}
  By Lemma~\ref{lem:flag} the noncrossing hypertree complex is a flag
  complex and by Lemma~\ref{lem:star} the subcomplex under discussion
  is the star of a simplex.  Proposition~\ref{prop:stars-contract}
  completes the proof.
\end{proof}

\section{Noncrossing Partitions}\label{sec:ncpart}

In this section we review the connection between noncrossing
partitions to noncrossing hypertrees and prove
Theorem~\ref{main:edge}.

\begin{defn}[Noncrossing partitions]\label{def:ncpart}
  Let $P$ be a convex $n$-gon.  A \emph{noncrossing partition of $P$}
  is a partition of its vertex set so that the convex hulls of the
  blocks of the partition are pairwise disjoint.  The noncrossing
  partitions of $P$ are ordered by refinement so that one partition is
  below another if every block of the first is a subset of some block
  of the second.  The result is a bounded graded lattice called the
  \emph{noncrossing partition lattice}.  Since the combinatorics only
  depend on the integer $n$ and not on the choice of polygon $P$, the
  noncrossing partition lattice of $P$ is denoted $\ncp_n$.  The
  \emph{noncrossing partition link} is the order complex of the
  noncrossing partition lattice with its bounding elements removed.
  This means that the maximal simplices in the noncrossing partition
  link are labeled by the maximal chains in the noncrossing partition
  lattice.
\end{defn}

\begin{defn}[Noncrossing permutations]\label{def:ncperm}
  Noncrossing partitions in a convex polygon $P$ can be reinterpreted
  as permutations of the vertices of $P$.  To do this for a particular
  partition $\pi$ simply send each vertex $v$ of $P$ to the next
  vertex in the clockwise order of the vertices in the boundary cycle
  of the convex hull of the block of $\pi$ containing $v$.  The result
  is called a \emph{noncrossing permutation}.  Next we assign a
  (noncrossing) permutation to each pair of comparable elements in the
  noncrossing partition lattice by multiplying the permutation
  associated to the bigger element by the inverse of the permutation
  associated to the smaller element.  Under this procedure the
  permutations assigned to the covering relations are transpositions.
  And the collection of the transposition labels on the adjacent
  elements in a maximal chain correspond to the edges of a noncrossing
  hypertree in $P$ \cite{mccammond-ncht}.
\end{defn}

\begin{defn}[Properly ordered noncrossing trees]\label{def:proper-order}
  Let $T$ be a noncrossing tree in a polygon $P$.  When $T$ comes
  equipped with a linear ordering of its edges, we call $T$ an
  \emph{ordered noncrossing tree}.  The ordering is a \emph{proper
    ordering} when it extends the local linear orderings of the edges
  that share any particular vertex \cite{mccammond-ncht}.  This is
  equivalent to saying that the product of the transpositions
  corresponding to the edges multiplied in this order produces the
  single cycle that corresponds to the boundary cycle of $P$.
\end{defn}

\begin{lem}[Factorizations and trees]\label{lem:factor-tree}
  There are natural bijections between the maximal chains in the
  noncrossing partition lattice $\ncp_{n+1}$, factorizations of the
  $(n+1)$-cycle $(1,2,\ldots,n+1)$ into $n$ transpositions, and
  properly ordered noncrossing trees with $n+1$ vertices and $n$
  edges.
\end{lem}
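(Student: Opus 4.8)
The plan is to produce two bijections and compose them: one between maximal chains in $\ncp_{n+1}$ and length-$n$ transposition factorizations of the cycle $c=(1,2,\ldots,n+1)$, and one between such factorizations and properly ordered noncrossing trees with $n+1$ vertices. The first bijection is exactly the labeling scheme of Definition~\ref{def:ncperm} and the second is essentially the definition of a proper ordering (Definition~\ref{def:proper-order}), so most of the work is checking that the natural maps are well defined and mutually inverse.

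First I would recall how a transposition is read off a covering relation. Given a maximal chain $\hat 0=\pi_0 \lessdot \pi_1 \lessdot \cdots \lessdot \pi_n=\hat 1$ in $\ncp_{n+1}$, Definition~\ref{def:ncperm} attaches to $\pi_{i-1}\lessdot\pi_i$ the transposition $t_i$ obtained by multiplying the noncrossing permutation of $\pi_i$ by the inverse of that of $\pi_{i-1}$. Since the noncrossing permutation of $\hat 0$ is the identity and that of $\hat 1$ is $c$, the telescoping product of the $t_i$ in chain order equals $c$ (up to reindexing one fixes once and for all). This gives an ordered factorization $c=s_1 s_2\cdots s_n$. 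For the inverse direction, given such a factorization I would let $\pi_i$ be the noncrossing partition whose noncrossing permutation is the partial product $s_1\cdots s_i$, and the content is to verify that each $s_1\cdots s_i$ really is a noncrossing permutation, that $\pi_{i-1}\le\pi_i$ in $\ncp_{n+1}$, and that the resulting chain is saturated of length $n$, hence maximal.

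The main obstacle is precisely this verification: that \emph{every} factorization of $c$ into $n$ transpositions has all of its partial products inside $\ncp_{n+1}$ and produces a covering step at each stage. The standard inputs are that $n$ is the reflection length of $c$, so every such factorization is reduced; that in a reduced factorization each partial product lies in the absolute-order interval $[e,c]$ and each step is a cover there; and that the identification of noncrossing partitions with noncrossing permutations in Definition~\ref{def:ncperm} is a poset isomorphism between $\ncp_{n+1}$ and $[e,c]$. Since the lemma is stated as a recollection, I would cite these facts from \cite{mccammond-ncht} and the references therein rather than reprove them; granting them, the two maps are visibly inverse to one another.

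Finally, for the passage to properly ordered noncrossing trees: by Definition~\ref{def:ncperm} the unordered set $\{s_1,\ldots,s_n\}$ of transposition labels along a maximal chain is the edge set of a noncrossing tree on the $n+1$ vertices of $P$, and such a tree has exactly $n$ edges, which matches the length of the chain and forces the $s_i$ to be distinct. The order in which the $s_i$ appear along the chain is a proper ordering exactly because, by Definition~\ref{def:proper-order}, this is equivalent to the product of the transpositions in that order being the boundary cycle $c$ — which is what the previous paragraph guarantees. Conversely, a properly ordered noncrossing tree is by definition a transposition factorization of $c$ whose factors form a noncrossing tree, hence one of the factorizations above. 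Composing the two bijections yields the three-way correspondence in the statement; I expect every step except the one flagged above to be routine bookkeeping.
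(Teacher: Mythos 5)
Your sketch is correct in outline and identifies the right bijections and the right obstruction, but it is worth noting that the paper itself does not reprove any of this: its entire proof of the lemma is the citation \cite[Theorem~7.8, Corollary~7.9]{mccammond-ncht}. So your proposal is essentially an expansion of what that citation contains rather than an alternative argument. Two small remarks on the expansion. First, your claim that the $s_i$ are ``forced to be distinct'' because the tree has $n$ edges is stated a bit circularly: the cleaner order of deductions is that the $n$ transpositions must generate a transitive subgroup of $S_{n+1}$ (their product is the full cycle $c$), so the graph they span on $n+1$ vertices is connected and therefore has at least $n$ distinct edges, whence the $n$ factors are pairwise distinct and the graph is a tree. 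Second, the assertion that this tree is \emph{noncrossing} and that the chain order is a proper ordering in the sense of Definition~\ref{def:proper-order} is precisely the nontrivial content of the cited theorem; it is fine to defer it to \cite{mccammond-ncht}, as both you and the paper do, but it should be flagged as the one genuinely nonroutine step, alongside the verification that partial products of a reduced factorization of $c$ stay inside $\ncp_{n+1}$.
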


\begin{proof}
  \cite[Theorem~7.8, Corollary~7.9]{mccammond-ncht}.
\end{proof}

Since the maximal simplices of the noncrossing hypertree complex are
labeled by noncrossing trees and the maximal simplices of the
noncrossing partition link are labeled by \emph{properly ordered} 
noncrossing trees, it is not too surprising that there is a close relationship
between the two simplicial complexes.  In fact, they are homeomorphic
to each other, and this is one of the main results proved in
\cite{mccammond-ncht}.

\begin{thm}[Hypertrees and partitions]\label{thm:ncht-ncp}
  For any convex polygon $P$ there is a natural homeomorphism between
  the noncrossing partition link on $P$ and the noncrossing hypertree
  complex on $P$.  Moreover, the maximal simplices of the former
  labeled by the various proper orderings of a fixed noncrossing tree
  form a subcomplex that is identified with a single maximal simplex
  of the latter labeled by the common underlying noncrossing
  hypertree.
\end{thm}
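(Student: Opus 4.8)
\textit{Proof proposal.} The plan is to realize $\link(\ncp_n)$ as a subdivision of the noncrossing hypertree complex $X$ by means of an explicit combinatorial ``type'' map, and then to invoke the elementary fact that a subdivision of a finite simplicial complex is homeomorphic to it.

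First I would define, for each simplex of $\link(\ncp_n)$ --- that is, each chain $C\colon \hat 0 < \pi_1 < \cdots < \pi_k < \hat 1$ in $\ncp_n$ --- a noncrossing hypertree $H(C)$ on $P$ as follows. Setting $\pi_0 = \hat 0$ and $\pi_{k+1} = \hat 1$, each consecutive pair $\pi_{i-1} < \pi_i$ is comparable, so by Definition~\ref{def:ncperm} the quotient $\rho_i := \pi_i\pi_{i-1}^{-1}$ is a noncrossing permutation and hence corresponds to a noncrossing partition of $P$; let $H(C)$ be the collection of the nontrivial blocks of $\rho_1,\dots,\rho_{k+1}$, each drawn as the convex hull of the vertices it contains. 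Using Definition~\ref{def:ncperm} and Lemma~\ref{lem:factor-tree} I would check that $H(C)$ is indeed a noncrossing hypertree with more than one hyperedge --- when $C$ is a maximal chain this recovers precisely the (unordered) noncrossing tree of the associated factorization --- and that refining a chain refines its hypertree: if $C\subseteq C'$ then $H(C')$ refines $H(C)$, so that $H(C)$ is a face of $H(C')$ in $X$. In particular, for a noncrossing tree $T$ and its maximal simplex $\sigma_T$ of $X$, the collection $K_T$ of simplices $C$ of the link with $H(C)$ a face of $\sigma_T$ is a subcomplex, whose maximal simplices are exactly the maximal chains of type $T$, i.e.\ the maximal simplices of the link labeled by the proper orderings of $T$ in the sense of Definition~\ref{def:proper-order}. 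This already pins down the subcomplex appearing in the ``Moreover'' clause.

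Next I would analyse $K_T$ over a single tree $T$. By Definition~\ref{def:proper-order} the proper orderings of $T$ are exactly the linear extensions of the partial order $\prec_T$ on the edge set of $T$ generated by the clockwise local orders at the vertices. The aim of this step is to show that $K_T$, via the vertex-to-hypertree correspondence supplied by $H$, is isomorphic to the order complex of the proper part of the distributive lattice $J(\prec_T)$ of order ideals of $\prec_T$, and that this order complex is a subdivision of the simplex $\sigma_T$ --- trivially so, with $K_T = \sigma_T$, precisely when $\prec_T$ is a total order, i.e.\ when $T$ has a unique proper ordering. Here the even-sided dissection model of Theorem~\ref{thm:ncht-poly} is what keeps the bookkeeping honest: it is what guarantees that the intermediate partitions occurring along maximal chains of type $T$ are indexed exactly by the proper order ideals of $\prec_T$ and assemble into a genuine rectilinear subdivision of $\sigma_T$, rather than into some complex that merely maps onto it.

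Finally I would globalize. By the first step the subcomplexes $K_T$ cover $\link(\ncp_n)$ as $T$ ranges over all noncrossing trees, and the identifications furnished by $H$ agree on the overlaps $K_T\cap K_{T'}$ (which consist of the simplices whose type is a common face of $\sigma_T$ and $\sigma_{T'}$); hence the local subdivisions of the previous step assemble into a simplicial isomorphism between $\link(\ncp_n)$ and a subdivision $X'$ of $X$. Since $|X'| = |X|$, this produces the asserted homeomorphism, under which the simplices of $X'$ lying in a fixed maximal simplex $\sigma_T$ --- equivalently, the maximal simplices of the link labeled by the proper orderings of $T$ together with their faces --- are carried onto $\sigma_T$. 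The main obstacle is the middle step: one must verify that, over a fixed $T$, the type map assigns to each point of the link well-defined barycentric coordinates inside $\sigma_T$ that do not depend on the choice of proper ordering (equivalently, on the maximal chain through the given simplex), that the resulting map is injective on each piece and continuous across the walls separating the pieces indexed by the distinct linear extensions of $\prec_T$, and --- in the globalization step --- that these choices remain consistent as $T$ varies. Establishing this compatibility is where the full strength of the hypertree formalism of \cite{mccammond-ncht} is required.
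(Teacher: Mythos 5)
The paper offers no proof of this theorem: the entire proof is the citation ``See \cite[Section~8]{mccammond-ncht}.'' So there is no argument in this paper against which to compare your outline, and the theorem is being imported wholesale from the noncrossing hypertrees preprint.

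Your sketch identifies a plausible shape for such a proof: a type map $C \mapsto H(C)$ sending a chain to the hypertree whose hyperedges are the nontrivial blocks of the consecutive quotients $\rho_i = \pi_i\pi_{i-1}^{-1}$, together with a local analysis showing that the chains of type $T$ assemble into a rectilinear subdivision of the simplex $\sigma_T$, with pieces controlled by order ideals of the partial order $\prec_T$. But, as you acknowledge, every nontrivial verification is deferred. In particular: (i) you assert but do not verify that $H(C)$ is a noncrossing hypertree --- for a non-maximal chain the quotients $\rho_i$ can have several nontrivial blocks each, and pairwise weak noncrossing of blocks belonging to \emph{different} $\rho_i$ is precisely the kind of claim the hypertree formalism is needed to establish; (ii) you assert but do not verify that the order complex of the proper part of $J(\prec_T)$ is a rectilinear subdivision of $\sigma_T$ --- this is not a formal consequence of distributivity and must use structural facts about $\prec_T$ (for instance that incomparable edges of $T$ are disjoint and hence give commuting transpositions, and that the Hasse diagram of $\prec_T$ is connected because $T$ is a tree); and (iii) the global compatibility across the $K_T$ is likewise stated rather than proved. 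Since you explicitly offload all of these to \cite{mccammond-ncht}, the proposal reads as an annotated guide to the argument that presumably appears there, rather than a self-contained proof. Given that the paper's own proof is a bare citation to the same source, that puts your outline on roughly equal footing with the paper, but it does not constitute an independent proof.
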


\begin{proof}
  See \cite[Section~$8$]{mccammond-ncht}.
\end{proof}

Theorem~\ref{thm:ncht-ncp} allows us to identify subcomplexes of the
noncrossing hypertree complex with their images in the noncrossing
partition link.  In particular, Theorem~\ref{main:edge} is an
immediate consequence of Theorem~\ref{thm:ncht-edge} and
Theorem~\ref{thm:ncht-ncp}.  We conclude this section with a concrete
example to show what Theorem~\ref{thm:ncht-ncp} looks like when the
polygon $P$ is a square.

\begin{figure}
  \begin{tikzpicture}
    \begin{scope}[shift={(3,0)}]
      \foreach \n in {1,...,8}{\coordinate (\n) at (22.5+\n*45:3cm) {};}
      \foreach \n in {9,...,12}{\coordinate (\n) at (157.5+\n*45:1.5cm) {};}
      \draw[very thick] (8)--(1)--(2)--(3);
      \draw[thick,dashed] (3)--(4)--(5)--(6)--(7)--(8);
      \draw[very thick] (1) to [out=-90,in=67.5] (10);
      \draw[very thick] (2) to [out=-90,in=112.5] (11);
      \draw[very thick] (10) to (5) (11) to (6);
      \draw[thick,dashed] (3) to (7) (4) to (8);
      \foreach \n in {1,...,12}{\draw[fill=black] (\n) circle (3pt);}
    \end{scope}
  \end{tikzpicture}
  \caption{The noncrossing partition link of a square $P$. The edges
    of the link are dashed when the properly ordered noncrossing tree
    $T$ labeling the edge contains the boundary edge $e$ along the
    bottom of the square $P$ and solid when $T$ does not contain $e$.
    \label{fig:ncpl4}}
\end{figure}
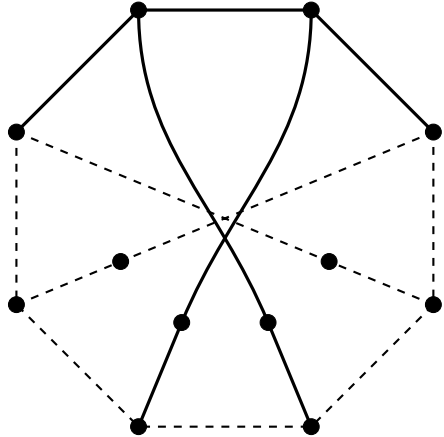

\begin{example}[Noncrossing partition link of a square]\label{ex:ncpl4}
  Let $P$ be a square with horizontal and vertical sides and let $e$
  be the bottom edge of $P$.  The noncrossing partition link of $P$ is
  a nonplanar graph with $12$ vertices and $16$ edges.  See
  Figure~\ref{fig:ncpl4}.  The edges of the complex are labeled by
  properly ordered noncrossing trees.  Most noncrossing trees on $4$
  vertices have a unique proper ordering so the outside portion of the
  figure looks the same as in Figure~\ref{fig:ncht4}.  The noncrossing
  trees that look like an $N$ or a $Z$ up to rotation and/or
  reflection have two proper orderings.  As a consequence the single
  edges crossing through the middle of the image in
  Figure~\ref{fig:ncht4} become two edges in Figure~\ref{fig:ncpl4}.
  An edge has been drawn as a dashed line when its label contains the
  boundary edge $e$ and it has been drawn as a solid line when its
  label does not contain $e$.  Let $T_e$ be the tree formed by the $3$
  sides of $P$ other than $e$.  It has only one proper ordering and is
  the label for the edge along the top of the graph $X$, but note that the
  subgraph formed by the solid edges is no longer the star of the edge
  labeled $T_e$.  It is, however, homeomorphic to the star of the edge
  labeled $T_e$ in the noncrossing hypertree complex.
\end{example}

\section{Parking Functions}\label{sec:park}

In this section we recall Stanley's elegant bijection between parking
functions and maximal chains in the noncrossing partition lattice and
we use this to prove explicit versions of Theorems~\ref{main:last}
and~\ref{main:space}.

\begin{defn}[Parking functions and undesired spaces]\label{def:parking}
  Recall that an $n$-tuple $(a_1,\ldots, a_n)$ of positive integers is
  called a \emph{parking function} if, once its entries have been
  sorted into weakly increasing order, the $i$-th entry is at most
  $i$. The set of all parking functions of length $n$ is denoted
  $\pkfn_n$.  In \cite{reu16} the first author and his coauthors
  introduced a filtration of parking functions by undesired spaces.
  An integer $k$ is called an \emph{undesired parking space} for a
  specific parking function $(a_1, \ldots, a_n) \in \pkfn_n$ if $k$
  does not appear in this $n$-tuple.  The language refers to the model
  where the $i$-th car wants to park in the $a_i$-th parking space on
  a one-way street.  Let $\pkfn_{n,k}$ be the set of parking functions
  of length $n$ with the property that $k$ is the largest undesired
  parking space.  Notice that every parking function is either a
  permutation of the numbers $1$ through $n$ or it belongs to
  $\pkfn_{n,k}$ for some unique positive integer $k$.
\end{defn}

\begin{defn}[Stanley's bijection]\label{def:stanley}
  Let $P$ be a convex $(n+1)$-gon with vertex labels $1, \ldots,n+1$
  in clockwise-increasing order and note that the noncrossing
  permutation associated to trivial partition with only one block is
  the $(n+1)$-cycle $(1,2,\ldots,n+1)$.  For each factorization of $c$
  into $n$ transpositions, we can read off the smallest number in each
  transposition to produce an $n$-tuple with entries in the set
  $\{1,2,\ldots,n\}$.  Although it is not immediately obvious, Richard
  Stanley proved that this procedure establishes a natural bijection
  between the maximal chains in $\ncp_{n+1}$ thought of as
  factorizations of $c$ into $n$ transpositions, and the set $\pkfn_n$
  of length $n$ parking functions \cite{stanley97}.  By
  Lemma~\ref{lem:factor-tree} the set $\pkfn_n$ is also in bijection
  with the set of properly ordered noncrossing trees on $n+1$
  vertices.
\end{defn}

The parking functions where the last space is undesired can be
characterized as those factorizations that avoid a particular boundary
transposition.

\begin{lem}[Trees and parking functions]\label{lem:tree-park}
  Let $P$ be a convex $(n+1)$-gon with vertex labels $1,\ldots,n+1$
  in clockwise-increasing order and let $e$ be the boundary edge
  connecting the vertices labeled $n$ and $n+1$.  The properly ordered
  noncrossing trees that omit $e$ correspond to parking functions in
  $\pkfn_{n,n}$, i.e. the parking functions where no car wants to park
  in the last parking space.
\end{lem}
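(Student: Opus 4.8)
The plan is to trace through Stanley's bijection (Definition~\ref{def:stanley}) and identify exactly which transposition gets ``used'' when and where the number $n$ can appear as the smallest element of a transposition in a factorization of $c = (1,2,\ldots,n+1)$. First I would recall that under the bijection of Lemma~\ref{lem:factor-tree}, a properly ordered noncrossing tree corresponds to a factorization $c = t_1 t_2 \cdots t_n$, and the associated parking function is $(a_1,\ldots,a_n)$ where $a_i$ is the smaller of the two numbers in the transposition $t_i$. So the number $n$ appears in the parking function if and only if some transposition $t_i$ has $n$ as its smaller entry, which (since the vertices are labeled $1,\ldots,n+1$) happens if and only if some $t_i$ is the transposition $(n, n+1)$. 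The key combinatorial observation is that the transposition $(n,n+1)$ corresponds precisely to the boundary edge $e$ of the noncrossing tree: an edge of a noncrossing tree in $P$ joining vertices $j < k$ contributes the transposition $(j,k)$, so the edge joining $n$ and $n+1$ is exactly the edge whose transposition is $(n,n+1)$.

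The next step is to connect this to the notion of an undesired space. By definition, $n$ is an undesired parking space for $(a_1,\ldots,a_n)$ exactly when $n$ does not appear among the $a_i$. Since every entry $a_i$ lies in $\{1,\ldots,n\}$ and any parking function of length $n$ that is not a permutation of $\{1,\ldots,n\}$ must omit some value, I would note that $\pkfn_{n,n}$ — the parking functions whose largest undesired space is $n$ — is simply the set of parking functions that omit the value $n$ entirely (if $n$ is undesired at all, it is automatically the largest undesired space, being the maximum possible value). Combining this with the previous paragraph: a properly ordered noncrossing tree $T$ omits the boundary edge $e$ (joining $n$ and $n+1$) if and only if no $t_i$ equals $(n,n+1)$, if and only if no $a_i$ equals $n$, if and only if the associated parking function lies in $\pkfn_{n,n}$.

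Finally I would check that this correspondence is a genuine bijection and not merely an implication in one direction: Stanley's map is a bijection $\pkfn_n \to \{\text{factorizations of }c\}$ and Lemma~\ref{lem:factor-tree} gives a bijection between factorizations and properly ordered noncrossing trees, so it suffices to observe that the property ``$n$ appears among the $a_i$'' is matched exactly with the property ``$(n,n+1)$ appears among the $t_i$'' under Stanley's map — which is immediate from the definition of the map as reading off smallest entries, together with the fact that $(n,n+1)$ is the unique transposition with smaller entry equal to $n$ that can occur in a reduced factorization of $c$ (it is the only such transposition joining two vertices of $P$, and any transposition occurring in a factorization of a noncrossing permutation joins two vertices). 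Restricting both bijections to the complementary sets then yields the claimed bijection between properly ordered noncrossing trees omitting $e$ and $\pkfn_{n,n}$.

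The main obstacle is the bookkeeping around Stanley's bijection: one must be careful that the ``smallest number in each transposition'' really does record edge endpoints faithfully, and in particular that no transposition other than $(n,n+1)$ can contribute the value $n$. This rests on the fact that every transposition in a minimal-length factorization of the noncrossing $(n+1)$-cycle is itself a noncrossing transposition joining two of the $n+1$ polygon vertices, so its smaller endpoint is at most $n$ and equals $n$ only for the edge $\{n,n+1\}$; once that is pinned down, the rest is a routine unwinding of definitions.
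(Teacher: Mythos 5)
Your proposal is correct and follows essentially the same route as the paper's proof: unwind Stanley's bijection to see that $n$ appears in the parking function iff the transposition $(n,n+1)$ appears in the factorization iff the tree contains the boundary edge $e$, and note that a parking function has $n$ as its largest undesired space iff it omits $n$ at all. You simply spell out more of the bookkeeping (e.g.\ that $(n,n+1)$ is the only transposition with smaller entry $n$) that the paper leaves implicit.
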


\begin{proof}
  Under Stanley's bijection (Definition~\ref{def:stanley}), it is
  clear that $n$ occurs in the parking function if and only if 
  the transposition $(n,n+1)$ occurs in the factorization of the 
  $(n+1)$-cycle $c =(1,2,\ldots,n+1)$ into $n$ transpositions, 
  and this is true if and only if the
  corresponding properly ordered noncrossing tree contains the
  boundary edge $e$ (Lemma~\ref{lem:factor-tree}).
\end{proof}

If we consider the Hasse diagram of a finite bounded poset as a directed
graph whose vertices are its elements and with a directed edge for
each covering relation, then the maximal chains in the poset
correspond to directed paths from its unique minimum element to its
unique maximum element.

\begin{defn}[Parking functions, posets and subcomplexes]
  For each subset $A \subset \pkfn_n$ we define a poset $\pos(A)$ as
  the poset whose Hasse diagram is the union of the directed paths
  that correspond to the parking functions in $A$ under Stanley's
  bijection.  For any bounded poset $P$ we write $\link(P)$ for the
  order complex of $P$ with its bounding elements removed.  With this
  definition, the complex $\link(\pos(A))$ is the subcomplex of the
  noncrossing partition link $\link(\ncp_{n+1})$ formed by the
  simplices labeled by parking functions in $A$.
\end{defn}

The following result is an explicit version of
Theorem~\ref{main:last}.

\begin{thm}[Undesired last parking space]\label{thm:last}
  The simplices labeled by the parking functions in $\pkfn_{n,n}$ form
  a subcomplex $\link(\pos(\pkfn_{n,n}))$ in the noncrossing partition
  link $\link(\ncp_{n+1})$ that is contractible.
\end{thm}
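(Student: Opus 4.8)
The plan is to assemble this theorem from the pieces already developed in the excerpt, with essentially no new work. By Definition~\ref{def:stanley}, Stanley's bijection identifies $\pkfn_n$ with the properly ordered noncrossing trees on $n+1$ vertices, and hence with the maximal simplices of the noncrossing partition link $\link(\ncp_{n+1})$. Under this identification, Lemma~\ref{lem:tree-park} tells us that the parking functions in $\pkfn_{n,n}$ are exactly those corresponding to properly ordered noncrossing trees that omit the boundary edge $e$ joining the vertices labeled $n$ and $n+1$. So the subcomplex $\link(\pos(\pkfn_{n,n}))$ of $\link(\ncp_{n+1})$ is precisely the subcomplex spanned by the simplices whose labels are properly ordered noncrossing trees omitting $e$, which is exactly the subcomplex whose contractibility is asserted in Theorem~\ref{main:edge}.

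First I would state the identification explicitly: combine Definition~\ref{def:stanley} and Lemma~\ref{lem:tree-park} to conclude that a maximal simplex of $\link(\ncp_{n+1})$ lies in $\link(\pos(\pkfn_{n,n}))$ if and only if its label is a properly ordered noncrossing tree omitting $e$. Since the relevant subcomplexes are determined by which maximal simplices (equivalently, which maximal chains / directed paths in the Hasse diagram) they contain, this shows $\link(\pos(\pkfn_{n,n}))$ coincides with the subcomplex in the statement of Theorem~\ref{main:edge}. Then I would simply invoke Theorem~\ref{main:edge}, which has already been established (it follows from Theorem~\ref{thm:ncht-edge} via the homeomorphism of Theorem~\ref{thm:ncht-ncp}), to conclude that this subcomplex is contractible.

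The only point requiring a little care is the bookkeeping between the two descriptions of the subcomplex: one as ``the simplices labeled by parking functions in $\pkfn_{n,n}$'' (phrased via the poset $\pos(\pkfn_{n,n})$ and its order complex with bounding elements removed) and the other as ``the simplices labeled by properly ordered noncrossing trees that omit $e$.'' I would note that a lower-dimensional simplex of $\link(\ncp_{n+1})$ lies in $\link(\pos(A))$ precisely when it is a face of some maximal simplex whose label lies in $A$ — this is built into the definition of $\pos(A)$ as the union of directed paths — so matching the two subcomplexes reduces exactly to matching their sets of maximal simplices, which is what Lemma~\ref{lem:tree-park} provides. I do not expect any genuine obstacle here; the content of the theorem is entirely in the earlier results, and this proof is a short translation between the parking-function language and the noncrossing-tree language. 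The proof therefore consists of a single paragraph: identify the subcomplex via Lemma~\ref{lem:tree-park}, then cite Theorem~\ref{main:edge}.
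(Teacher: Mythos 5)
Your proof is correct and matches the paper's own argument exactly: the paper also derives Theorem~\ref{thm:last} directly from Theorem~\ref{main:edge} together with Lemma~\ref{lem:tree-park}. The extra bookkeeping you supply about matching maximal simplices is sound and is simply left implicit in the paper's one-line proof.
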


\begin{proof}
  This is an immediate consequence of Theorem~\ref{main:edge} and
  Lemma~\ref{lem:tree-park}.
\end{proof}

In order to extend this result to the other collections $\pkfn_{n,k}$
we need to quote a decomposition result from \cite{reu16}.

\begin{lem}[Decomposition]\label{lem:decomp}
  For all positive integers $k + \ell = n$ with $k>1$, the poset
  $\pos(\pkfn_{n,k})$ is isomorphic to the product $\pos(\pkfn_{k,k})
  \times \bool_\ell$, where $\bool_\ell$ is the Boolean lattice of rank
  $\ell$.
\end{lem}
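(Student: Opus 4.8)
The plan is to give an explicit combinatorial bijection between the parking functions in $\pkfn_{n,k}$ and pairs $(f, S)$ where $f \in \pkfn_{k,k}$ and $S \subseteq \{k+1, \ldots, n\}$ is an arbitrary subset, and then to check that this bijection is compatible with the covering relations so that it induces an isomorphism of the associated posets. Recall that an element of $\pkfn_{n,k}$ is a parking function of length $n$ in which $k$ is the largest integer that fails to appear. First I would observe that for such a parking function every value in $\{k, k+1, \ldots, n\}$ other than $k$ itself \emph{does} appear, and a counting argument (the sorted sequence must dominate $(1,2,\ldots,n)$ from below) forces each of the values $k+1, \ldots, n$ to appear \emph{exactly once}, while the values $1, \ldots, k-1$ together with the missing slot at $k$ behave exactly like a length-$k$ parking function whose largest undesired space is $k$. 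This is the structural observation that makes the product decomposition plausible: the ``large'' coordinates are rigidly determined up to which positions they occupy, and the ``small'' coordinates form an independent copy of $\pkfn_{k,k}$.

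Next I would translate this through Stanley's bijection (Definition~\ref{def:stanley}) into the language of factorizations of the $(n+1)$-cycle, or equivalently properly ordered noncrossing trees on $n+1$ vertices. Under Stanley's map the $i$-th coordinate records the smaller endpoint of the $i$-th transposition, so the constraint that each of $k+1, \ldots, n$ appears exactly once says that exactly one transposition has smaller endpoint $j$ for each $j \in \{k+1, \ldots, n\}$; one checks, using the noncrossing condition on the associated tree, that each such transposition must in fact be the boundary edge $(j, j+1)$ of the polygon. Removing these $\ell$ boundary edges leaves a properly ordered noncrossing tree on the $k+1$ vertices $\{1, \ldots, k, n+1\}$ (after the obvious relabeling), which corresponds under Stanley's bijection to an element of $\pkfn_{k,k}$ by Lemma~\ref{lem:tree-park}. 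The data of \emph{where} in the linear order the $\ell$ boundary transpositions are inserted, relative to the edges of the smaller tree, is precisely a choice of subset of the $\ell$ ``gap positions'', and this is what produces the $\bool_\ell$ factor. So the map sends a parking function to the pair consisting of its underlying ``reduced'' parking function in $\pkfn_{k,k}$ and the subset of positions occupied by the large boundary transpositions.

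Finally I would verify that this bijection is a poset isomorphism $\pos(\pkfn_{n,k}) \cong \pos(\pkfn_{k,k}) \times \bool_\ell$. Since both posets are defined via Stanley's bijection as unions of directed paths in Hasse diagrams, it suffices to show that the covering relations match up: a cover in $\pos(\pkfn_{n,k})$ is obtained by deleting one transposition from a factorization (equivalently, one edge from the tree), and under the decomposition this is either the deletion of an edge of the reduced tree — a cover in the $\pos(\pkfn_{k,k})$ factor — or the deletion of one of the $\ell$ boundary edges — a cover in the $\bool_\ell$ factor — with the two operations on disjoint data. The commuting of these deletions is exactly the statement that the poset structure is the product. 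The main obstacle, and the step that requires genuine care rather than bookkeeping, is the claim that each of the values $k+1, \ldots, n$ occurs exactly once and that the corresponding transpositions are forced to be boundary edges $(j, j+1)$; everything else is a matter of checking that the bijection respects the recursive structure. This forcing is essentially the content of the decomposition result from \cite{reu16} that we are quoting, so in the write-up I would either cite it directly or reproduce the short argument that the noncrossing and parking-function constraints leave no other possibility.
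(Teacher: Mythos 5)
Your opening observation --- that for a parking function in $\pkfn_{n,k}$ the multiset of entries must contain each of $k+1,\ldots,n$ exactly once while the remaining $k$ entries form a sorted $\pkfn_{k,k}$ pattern --- is correct and is a reasonable place to start.  However, the pivotal claim in your second paragraph, that ``each such transposition must in fact be the boundary edge $(j,j+1)$,'' is false, and this breaks the rest of the argument.  Here is a small counterexample.  Take $n=4$, $k=2$, $\ell=2$, and consider the noncrossing tree $T$ on the convex pentagon with vertices $1,\ldots,5$ whose edges are $(1,2)$, $(1,3)$, $(3,5)$ and $(4,5)$.  One checks directly that these four chords are pairwise weakly noncrossing, and that
\[
(3,5)\,(1,2)\,(1,3)\,(4,5) \;=\; (1,2,3,4,5),
\]
so this is a proper ordering of $T$; under Stanley's bijection the smaller endpoints yield the parking function $(3,1,1,4)$.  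This parking function omits $2$ and contains $3$ and $4$, so it lies in $\pkfn_{4,2}$, yet the unique edge with smaller endpoint $3$ is the chord $(3,5)$, not the boundary edge $(3,4)$.  Consequently the next step also fails: removing the two edges with smaller endpoints in $\{3,4\}$ leaves the pair $\{(1,2),(1,3)\}$, which is a tree on the vertex set $\{1,2,3\}$ rather than on $\{1,2,5\}=\{1,\ldots,k,n+1\}$ as you assert.  You flag this forcing step as the one requiring ``genuine care,'' and indeed it is precisely the step that does not survive scrutiny; the actual isomorphism constructed in \cite[Theorem~3.5]{reu16} cannot proceed by identifying the large-valued edges with boundary edges of the polygon.

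As a secondary issue, even granting the erroneous claim, the data you extract --- a ``subset of positions occupied by the large boundary transpositions'' --- is an $\ell$-element subset of $\{1,\ldots,n\}$ together with an ordering, which is not the same as an element of $\bool_\ell$; to get the poset isomorphism one needs to describe the map on \emph{elements} of $\pos(\pkfn_{n,k})$ (i.e., individual noncrossing partitions), not merely on the maximal chains, and check the covering relations there.  Finally, for comparison: the paper does not give a proof of this lemma at all --- it simply cites \cite[Theorem~3.5]{reu16} --- so any self-contained argument here would be new content, but it would need a correct replacement for the boundary-edge forcing claim.
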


\begin{proof}
 \cite[Theorem~3.5]{reu16}.
\end{proof}

When a poset splits as a direct product of two smaller posets, the
link of the whole can be constructed from the link of each factor.

\begin{lem}[Links and products]\label{lem:link-prod}
  If $P= P_1 \times P_2$ is a bounded graded poset that splits as a
  product of smaller bounded graded posets, then $\link(P)$ is
  homeomoprhic to $\link(P_1) \ast \link(P_2)$, the spherical join of
  the smaller links.  As a consequence, when $\link(P_1)$ is
  contractible, so is $\link(P)$.
\end{lem}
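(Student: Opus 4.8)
The plan is to prove Lemma~\ref{lem:link-prod} by relating the order complex of a product poset to the join of the order complexes of its factors. First I would recall the standard fact that for bounded posets $P_1$ and $P_2$, a chain in $P_1 \times P_2$ projects to a (possibly non-strict) chain in each factor, and conversely a pair of chains in the factors can be interleaved in various ways to produce chains in the product. The key observation is that $\link(P_i)$ is the order complex of the \emph{open interval} $\hat P_i = P_i \setminus \{\hat 0_i, \hat 1_i\}$, and that the open interval of $P_1 \times P_2$ between $\hat 0 = (\hat 0_1, \hat 0_2)$ and $\hat 1 = (\hat 1_1, \hat 1_2)$ is not simply $\hat P_1 \times \hat P_2$ — one must also keep the elements $(\hat 0_1, y)$ and $(x, \hat 0_2)$ and their counterparts at the top. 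The cleanest route is to invoke the well-known formula for the order complex of a product of bounded posets: for bounded posets $P_1, P_2$ one has a homotopy equivalence (in fact a homeomorphism of the associated spherical complexes, using the piecewise-spherical metric structure described in Section~\ref{sec:ncpart}) between the order complex of the open interval $(\hat 0, \hat 1)$ in $P_1 \times P_2$ and the join $\link(P_1) \ast \link(P_2)$. This is the poset-theoretic analogue of the fact that $\mathrm{Susp}(X \ast Y) \simeq \mathrm{Susp}(X) \ast \mathrm{Susp}(Y)$ after accounting for the cone points contributed by the bounding elements.

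Concretely, the steps I would carry out are: (1) identify $\link(P)$ with the order complex $\Delta(\bar P)$ where $\bar P$ is the proper part of $P = P_1 \times P_2$; (2) show that a simplex of $\Delta(\bar P)$, i.e. a chain $(x_1,y_1) < (x_2,y_2) < \cdots < (x_m,y_m)$ of non-bounding elements, is determined by the pair of weakly increasing sequences it induces in $P_1$ and $P_2$ together with the combinatorial "shuffle" data recording at each step whether the first coordinate strictly increases, the second strictly increases, or both; (3) recognize that after discarding any repeated or bounding coordinate entries, this is exactly the cell structure of a simplex in the join $\link(P_1) \ast \link(P_2)$, where the join of simplices $\tau_1 \ast \tau_2$ has as its chains precisely the shuffles of a chain in $\Delta(\bar P_1)$ and a chain in $\Delta(\bar P_2)$; (4) check that this correspondence respects faces and is a homeomorphism — and that under the piecewise-spherical metrics it is the spherical join in the metric sense, not merely up to homotopy. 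Finally, (5) for the consequence: the spherical join $Y_1 \ast Y_2$ deformation retracts onto $Y_1$ (or $Y_2$) whenever the other factor is nonempty, since a join with any nonempty complex is contractible onto either factor when that factor is itself contractible — indeed $Y_1 \ast Y_2$ is the union of cones $C(Y_2)$ over points of $Y_1$ glued along $Y_2$, so contracting $Y_1$ to a point collapses the whole join to the cone $C(Y_2)$, which is contractible. So if $\link(P_1)$ is contractible then $\link(P) \simeq \link(P_1) \ast \link(P_2)$ is contractible.

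The main obstacle I anticipate is the bookkeeping around the bounding elements of the factors: in the product $P_1 \times P_2$, an element such as $(\hat 0_1, y)$ with $y$ a non-bounding element of $P_2$ is \emph{not} a bounding element of the product, so it is a genuine vertex of $\link(P)$ even though it does not correspond to a vertex of either $\link(P_1)$ or $\link(P_2)$ — rather it corresponds to the cone point of the cone on $\link(P_2)$. Getting the dictionary exactly right, so that chains through such "mixed" elements match up with faces of the join (where join faces are exactly $\tau_1 \ast \tau_2$ with $\tau_i$ allowed to be empty), is the delicate combinatorial heart of the argument. I would handle this by first establishing the homeomorphism on the level of a single maximal cell — one maximal chain in $P$ shuffles a maximal chain in $P_1$ with a maximal chain in $P_2$, and the shuffle simplices of dimensions $d_1$ and $d_2$ tile a sphere $S^{d_1 + d_2 + 1}$ exactly as the join of two boundary spheres $S^{d_1} \ast S^{d_2}$ does — and then observing that these cellwise homeomorphisms are compatible across the complex because they are induced by the natural projections $P \to P_i$, which are order-preserving. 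Since a full proof of the product-to-join formula for order complexes is somewhat lengthy and is not the novel contribution of this paper, I would likely state it as a known fact with a reference (e.g. to Quillen or to Walker's work on poset joins) and spend the written proof only on step (5), the contractibility consequence, which is elementary.
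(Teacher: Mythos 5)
Your step (5) matches the paper's proof of the second assertion: the paper simply observes that since $L_1 = \link(P_1)$ is contractible, $L_1 \ast L_2$ is homotopy equivalent to $\{\ast\} \ast L_2$, a cone, hence contractible. (The remark in your proposal that the join ``deformation retracts onto $Y_1$'' is false in general --- $S^0 \ast S^0 = S^1$ does not retract onto $S^0$ --- but you do not rely on it; the argument you actually give, collapsing $Y_1$ to a point and obtaining a cone on $Y_2$, is the correct one and is exactly the paper's.) For the first assertion the paper calls it ``an easy exercise'' and you propose to cite a standard source, so both treatments defer the combinatorial details.

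The point that deserves a flag is that the homeomorphism as stated, $\link(P_1\times P_2)\cong\link(P_1)\ast\link(P_2)$, is off by a suspension, and your outline would reproduce that error rather than catch it. The standard formula for the order complex of the proper part of a product of bounded posets is $\link(P_1\times P_2)\cong S^0\ast\link(P_1)\ast\link(P_2)$, the \emph{suspension} of the join. A Boolean-lattice sanity check makes this visible: $\bool_{m_1}\times\bool_{m_2}=\bool_{m_1+m_2}$ and $\link(\bool_m)\cong S^{m-2}$, so the left-hand side is $S^{m_1+m_2-2}$ while $\link(\bool_{m_1})\ast\link(\bool_{m_2})\cong S^{m_1+m_2-3}$, one dimension short. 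The missing $S^0$ factor comes exactly from the ``mixed'' elements $(\hat 0_1,y)$, $(x,\hat 1_2)$ and their counterparts that you rightly identify as the delicate part of the bookkeeping; your step (3), however, concludes that the resulting cell structure is that of $\link(P_1)\ast\link(P_2)$, which is where the dimension count slips. (The analogy you invoke, $\mathrm{Susp}(X\ast Y)\simeq\mathrm{Susp}(X)\ast\mathrm{Susp}(Y)$, is also not an identity --- the right side carries an extra $S^1$, not $S^0$ --- another sign the accounting has drifted.) None of this affects the conclusion that is actually used downstream, since a join with an extra $S^0$ factor is still contractible once any one factor is, so the derivation of Theorem~\ref{thm:space} survives intact. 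But if you cite Quillen or Walker for the homeomorphism, you should quote it in its correct suspension form, and the lemma statement should be adjusted to match.
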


\begin{proof}
  The first assertion is an easy exercise.  For the second assertion
  let $L_i = \link(P_i)$.  Since $L_1$ is contractible, it is homotopy
  equivalent to a point.  Thus $L_1 \ast L_2$ is homotopy equivalent
  to the spherical join of a point and $L_2$, which is homeomorphic to
  the cone on $L_2$, which is contractible.
\end{proof}

Using Lemma~\ref{lem:decomp} and Lemma~\ref{lem:link-prod} we prove
the following explicit version of Theorem~\ref{main:space}.

\begin{thm}[Undesired parking space]\label{thm:space}
  For all integers $1 < k \leq n$, the simplices labeled by the
  parking functions in $\pkfn_{n,k}$ form a contractible subcomplex of
  the noncrossing partition link $\link(\ncp_{n+1})$.
\end{thm}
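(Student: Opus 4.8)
The plan is to reduce the statement to the already-established Theorem~\ref{thm:last} by using the decomposition lemma and the behavior of links under products. The case $k = n$ is exactly Theorem~\ref{thm:last}, so I would assume $1 < k < n$ and set $\ell = n - k$, which is a positive integer. By Lemma~\ref{lem:decomp}, the poset $\pos(\pkfn_{n,k})$ is isomorphic to the product $\pos(\pkfn_{k,k}) \times \bool_\ell$. Both factors are bounded graded posets: $\pos(\pkfn_{k,k})$ inherits this from being a union of maximal chains in $\ncp_{k+1}$, and $\bool_\ell$ is the Boolean lattice of rank $\ell$, which is manifestly bounded and graded. So the hypotheses of Lemma~\ref{lem:link-prod} are satisfied.

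Next I would apply Lemma~\ref{lem:link-prod} with $P_1 = \pos(\pkfn_{k,k})$ and $P_2 = \bool_\ell$. By Theorem~\ref{thm:last} (applied with $k$ in place of $n$), the link $\link(\pos(\pkfn_{k,k}))$ is contractible. The ``as a consequence'' clause of Lemma~\ref{lem:link-prod} then immediately gives that $\link(P_1 \times P_2) = \link(\pos(\pkfn_{n,k}))$ is contractible. Finally, I would invoke the identification (from the definition preceding Theorem~\ref{thm:last}) that $\link(\pos(\pkfn_{n,k}))$ is precisely the subcomplex of $\link(\ncp_{n+1})$ formed by the simplices labeled by parking functions in $\pkfn_{n,k}$, which completes the proof.

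The only genuine subtlety I anticipate is making sure the indices in Lemma~\ref{lem:decomp} line up: the lemma is stated for $k + \ell = n$ with $k > 1$, and it produces a factor $\pos(\pkfn_{k,k})$, so I want $k > 1$ (guaranteed by the hypothesis $1 < k$) and $\ell > 0$ (guaranteed by $k < n$, the remaining case after handling $k = n$ separately). There is nothing deep here — the heavy lifting was already done in Theorem~\ref{main:edge}/Theorem~\ref{thm:last} and in the cited results \cite{reu16, mccammond-ncht} — so the ``proof'' is really just the bookkeeping of assembling these pieces. I would keep it to three or four sentences.

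\begin{proof}
  When $k = n$ this is exactly Theorem~\ref{thm:last}, so assume $1 < k < n$ and let $\ell = n - k > 0$.  By Lemma~\ref{lem:decomp}, the poset $\pos(\pkfn_{n,k})$ is isomorphic to the product $\pos(\pkfn_{k,k}) \times \bool_\ell$ of bounded graded posets.  By Theorem~\ref{thm:last}, the link $\link(\pos(\pkfn_{k,k}))$ is contractible, so Lemma~\ref{lem:link-prod} implies that $\link(\pos(\pkfn_{n,k}))$ is contractible.  Since $\link(\pos(\pkfn_{n,k}))$ is precisely the subcomplex of $\link(\ncp_{n+1})$ spanned by the simplices labeled by parking functions in $\pkfn_{n,k}$, the proof is complete.
\end{proof}
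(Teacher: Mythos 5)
Your proof is correct and follows exactly the same route as the paper's: split off the case $k=n$ (Theorem~\ref{thm:last}), then for $1<k<n$ apply the decomposition $\pos(\pkfn_{n,k}) \cong \pos(\pkfn_{k,k}) \times \bool_\ell$ from Lemma~\ref{lem:decomp} and conclude via Lemma~\ref{lem:link-prod}. The only difference is cosmetic — you spell out the verification of the bounded-graded hypotheses and the final identification with the subcomplex of $\link(\ncp_{n+1})$, which the paper leaves implicit.
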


\begin{proof}
  For $k=n$, this follows from Theorem~\ref{thm:last}.  For $1 < k <
  n$, let $\ell = n-k > 0$. By Lemma~\ref{lem:decomp} the poset
  $\pos(\pkfn_{n,k})$ splits as a product of $\pos(\pkfn_{k,k})$ and
  the Boolean lattice $\bool_\ell$ and since
  $\link(\pos(\pkfn_{k,k}))$ is contractible by
  Theorem~\ref{thm:last}, $\link(\pos(\pkfn_{n,k}))$ is contractible
  by Lemma~\ref{lem:link-prod}.
\end{proof}

\newcommand{\etalchar}[1]{$^{#1}$}

\end{document}